\documentclass[11pt]{article}
\usepackage[a4paper,margin=25mm]{geometry}
\usepackage[T1]{fontenc}
\usepackage{lmodern}
\usepackage{amsmath,amssymb,amsthm,mathtools}
\usepackage{booktabs,microtype,xcolor,textcomp,longtable}
\usepackage{setspace}
\usepackage[colorlinks=true,linkcolor=blue!55!black,citecolor=blue!55!black,urlcolor=blue!55!black]{hyperref}
\newtheorem{theorem}{Theorem}[section]
\newtheorem{proposition}[theorem]{Proposition}
\newtheorem{lemma}[theorem]{Lemma}
\newtheorem{corollary}[theorem]{Corollary}
\theoremstyle{remark}

\numberwithin{equation}{section}
\newcommand{\fall}[2]{#1^{\underline{#2}}}
\allowdisplaybreaks
\hypersetup{
  pdftitle={Infinite log-concavity of the Boros--Moll sequences},
  pdfauthor={Matthew H. Y. Xie and Philip B. Zhang},
  pdfkeywords={Boros-Moll polynomials, infinite log-concavity, real-rootedness, Narayana polynomials, Jacobi polynomials}
}
\title{Infinite log-concavity of the Boros--Moll sequences}
\author{Matthew H. Y. Xie\textsuperscript{1} and
Philip B. Zhang\textsuperscript{2}\\[6pt]
  \small \textsuperscript{1}School of Mathematical Sciences,\\
  \small Tianjin University of Technology, Tianjin 300384, P. R. China\\[4pt]
  \small \textsuperscript{2}College of Mathematical Sciences
   \& Institute of Mathematics and Interdisciplinary Sciences,\\
  \small Tianjin Normal University, Tianjin 300387, P. R. China\\[6pt]
  \small Email: \textsuperscript{1}\href{mailto:xie@email.tjut.edu.cn}{\texttt{xie@email.tjut.edu.cn}},
  \textsuperscript{2}\href{mailto:zhang@tjnu.edu.cn}{\texttt{zhang@tjnu.edu.cn}} (corresponding author)}
\date{}
\begin{document}
\setstretch{1.2}
\maketitle
\begin{abstract}
Let $(d_i(n))_{i=0}^n$ be the Boros--Moll coefficient sequence.
We prove that, for every integer $n\ge1$,
the polynomial
\[
 M_n(x)=\sum_{i=0}^n
 \bigl(d_i(n)^2-d_{i-1}(n)d_{i+1}(n)\bigr)x^i
\]
has only simple negative zeros, which strictly interlace those of the
Narayana polynomial of the same degree. This proves a conjecture of
Chen, Yang, and Zhang and,
by Br\"and\'en's preservation theorem, settles the infinite
log-concavity conjecture of Boros and Moll. The proof uses an expansion
of the reversed and normalized form of $M_n(x)$ in derivatives of
the Narayana polynomial, together with estimates for the weights and
partial sums of the normalized derivatives.
\end{abstract}

\noindent\textbf{2020 Mathematics Subject Classification.}
Primary 26C10; Secondary 05A20, 33C45.

\smallskip
\noindent\textbf{Keywords.}
Boros--Moll polynomials, infinite log-concavity, real-rootedness,
Narayana polynomials, Jacobi polynomials.

\section{Introduction}

The Boros--Moll polynomials arise in the evaluation of the quartic integral
\[
 \int_0^\infty\frac{dt}{(t^4+2at^2+1)^{n+1}}
 =\frac{\pi}{2^{n+3/2}(a+1)^{n+1/2}}B_n(a),
 \qquad a>-1,
\]
where $n$ is a nonnegative integer and
$B_n(x)=\sum_{i=0}^n d_i(n)x^i$
\cite{AmdeberhanMoll2009,BorosMollIntegral,BorosMollDoubleRoot2001,BorosMollBook}.
The coefficients are positive and admit the expression
\begin{equation}\label{eq:bm-coefficients}
 d_i(n)=4^{-n}\sum_{k=i}^n2^k
 \binom{2n-2k}{n-k}\binom{n+k}{k}\binom{k}{i},
 \qquad 0\le i\le n.
\end{equation}
We set $d_i(n)=0$ for $i<0$ or $i>n$.
We study the coefficient sequence under repeated applications of the
log-concavity operator. For a finite nonnegative sequence
$\mathbf{a}=(a_0,\ldots,a_n)$, extend $\mathbf{a}$ by zero and define
\[
 (\mathcal L\mathbf{a})_i=a_i^2-a_{i-1}a_{i+1}.
\]
We say that $\mathbf{a}$ is $r$-log-concave if $\mathcal L^j\mathbf{a}$
is nonnegative for $1\le j\le r$, and infinitely log-concave if this
holds for every $r\ge1$.

Boros and Moll \cite{BorosMoll1999} proved the unimodality
of the coefficient sequence $(d_i(n))_{i=0}^n$.
Their ordinary log-concavity conjecture, recorded by Moll
\cite[p.~316]{Moll2002}, was proved by Kauers and Paule
\cite{KauersPaule2007} using recurrences for $d_i(n)$.
Chen, Pang, and Qu gave combinatorial proofs
of the coefficient formula \cite{ChenPangQuFormula} and of
log-concavity \cite{ChenPangQuPermutations}. Boros and Moll further
conjectured that $(d_i(n))_{i=0}^n$ is infinitely log-concave for every
$n\ge0$, as recorded by Moll \cite[p.~316]{Moll2002}; see also
\cite[p.~157]{BorosMollBook} and
\cite[Conjecture~8.4]{Branden}.

Chen and Xia \cite{ChenXia2LC} proved that the Boros--Moll coefficients
are $2$-log-concave by bounding the ratios of consecutive coefficients.
Chen, Dou, and Yang \cite{ChenDouYang2013}
proved Br\"and\'en's conjectures that
$\sum_{i=0}^n d_i(n)x^i/i!$ and
$\sum_{i=0}^n d_i(n)x^i/(i+2)!$ have only real zeros.
In fact, they showed that both polynomial families form Sturm sequences
\cite[Theorems~2.2 and~2.4]{ChenDouYang2013}.
By the results of Craven and Csordas on iterated Tur\'an inequalities
\cite{CravenCsordas}, these real-rootedness results imply
$2$- and $3$-log-concavity of the Boros--Moll coefficient sequence, respectively.
Guo \cite{GuoTuran2022} established higher order Tur\'an
inequalities for the Boros--Moll sequences.
Further coefficient inequalities appear in
\cite{ChenGuReverse,ChenXiaRatio,ChenWangXiaInterlacing,ChenXiaMinimum}.
For fixed $i$, Jiang and Wang \cite{JiangWang2024} studied
higher order Tur\'an inequalities and Laguerre inequalities of low order
for the sequences $(d_i(n))_{n\ge i}$.

One sufficient condition for infinite log-concavity is
$a_i^2\ge r_0a_{i-1}a_{i+1}$, where $r_0=(3+\sqrt5)/2$.
For nonnegative sequences, this condition is preserved by $\mathcal L$
\cite{CravenCsordas,McNamaraSagan}.
Uminsky and Yeats \cite{UminskyYeats2007} constructed invariant regions
for symmetric sequences with $a_0=a_n=1$.
According to McNamara and Sagan~\cite[Section~2]{McNamaraSagan},
Kauers verified the infinite log-concavity conjecture of Boros and Moll
computationally for $n\le129$ by checking that the fifth iterate satisfies
this strengthened condition.

A second route to infinite log-concavity uses real-rootedness.
Br\"and\'en \cite{Branden} proved that if a finite
nonnegative sequence has a generating polynomial with only
nonpositive real zeros, then so does its image under $\mathcal L$.
This settled a conjecture proposed independently by Fisk, McNamara--Sagan,
and Stanley; see~\cite{Fisk} and~\cite[Conjecture~7.1]{McNamaraSagan},
where Stanley's contribution is recorded as a private communication.
For the Boros--Moll coefficients, this theorem cannot be applied directly:
$B_n(x)$ has no real zeros when $n$ is even and exactly one when $n$
is odd \cite{BorosMoll1999}, and hence is not real-rooted for $n\ge2$.
Dimitrov
\cite{Dimitrov2004} and Lopez Santander, McLaughlin, and Moll
\cite{LopezSantanderMcLaughlinMoll2026} studied the zeros and asymptotics
of $B_n(x)$.

Chen, Yang, and Zhang
\cite[Conjecture~1.3]{ChenYangZhang2018} proposed applying the preservation
theorem after the first iteration. More precisely, they conjectured that
the polynomial
\begin{equation}\label{eq:first-transform}
 M_n(x)=\sum_{i=0}^n
 \bigl(d_i(n)^2-d_{i-1}(n)d_{i+1}(n)\bigr)x^i
\end{equation}
has only real zeros for every $n\ge1$.
They introduced generalized Narayana polynomials, proved their
real-rootedness by the criterion of Liu and Wang \cite{LiuWang2007},
and expressed $M_n(x)$ as a positive linear combination of these polynomials.

We prove the real-rootedness conjecture of Chen, Yang, and Zhang by
comparing $M_n(x)$ with the classical Narayana polynomial of the same degree,
\[
 N_n(x)=\sum_{k=0}^n N_{n,k}x^k,\qquad
 N_{n,k}=\frac1{k+1}\binom nk\binom{n+1}k\quad(0\le k\le n).
\]
Since $N_{n,k}=\binom nk^2-\binom n{k-1}\binom n{k+1}$, $N_n(x)$
generates the first transform of the binomial coefficient sequence.
The polynomials $N_n(x)$ also occur in Br\"and\'en's proof
of the preservation theorem \cite[Section~3]{Branden}.
The Jacobi representation of $N_n(x)$, discussed by Kostov,
Mart\'inez-Finkelshtein, and Shapiro
\cite[Proposition~6]{KostovMartinezFinkelshteinShapiro2009},
shows that $N_n(x)$ has $n$ simple negative zeros.

\noindent\begin{minipage}{\linewidth}
We prove the following stronger form of the conjecture of Chen, Yang, and Zhang.

\begin{theorem}\label{thm:main}\label{thm:interlacing}
For every integer $n\ge1$, the polynomial $M_n(x)$ has $n$ distinct
negative real zeros. Writing $\mu_{n,1}<\cdots<\mu_{n,n}$ and
$\nu_{n,1}<\cdots<\nu_{n,n}$ for the zeros of $M_n(x)$ and $N_n(x)$,
respectively, we have
\[
 \mu_{n,1}<\nu_{n,1}<\cdots<\mu_{n,n}<\nu_{n,n}<0.
\]
\end{theorem}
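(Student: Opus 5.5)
The plan follows the strategy announced in the abstract. We pass to a reversed and normalized form of $M_n(x)$ and expand it in the derivatives of the Narayana polynomial:
\[
 \widetilde M_n(x)=\sum_{j\ge0} w_{n,j}\,\frac{N_n^{(j)}(x)}{c_{n,j}},
\]
where $c_{n,j}$ normalizes each derivative (for example, making it monic or giving it a fixed value at $0$). Since the zeros of $N_n$ are simple and negative, sign information at the zeros of $N_n$ will give interlacing.

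\textbf{Step 1: the expansion.} Using \eqref{eq:bm-coefficients}, we write $d_i(n)$ as a sum over $k$ of $\binom{k}{i}$-weighted terms. Then $d_i^2-d_{i-1}d_{i+1}$ becomes a double sum over $(k,l)$ of expressions of the form $\binom ki\binom li-\binom k{i-1}\binom l{i+1}$. Symmetrizing in $(k,l)$, these are generalized Narayana numbers, which is the Chen--Yang--Zhang decomposition. Each generalized Narayana polynomial should then be rewritten in terms of derivatives of $N_n$. The natural tool is the Jacobi representation $N_n(x)\propto (1-x)^nP_n^{(1,-1)}\bigl(\tfrac{1+x}{1-x}\bigr)$ (up to the exact parameters), together with hypergeometric contiguity relations. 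Collecting terms produces explicit weights $w_{n,j}$. Each $w_{n,j}$ should be a nonnegative hypergeometric-type sum, and they should decay rapidly in $j$, with $w_{n,0}$ dominant.

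\textbf{Step 2: sign pattern at the zeros of $N_n$.} At a zero $\nu=\nu_{n,m}$ the $j=0$ term vanishes, so
\[
 \widetilde M_n(\nu)=w_{n,1}N_n'(\nu)/c_{n,1}+\sum_{j\ge2}w_{n,j}N_n^{(j)}(\nu)/c_{n,j}.
\]
We aim to show that the $j=1$ term dominates, so that $\operatorname{sign}\widetilde M_n(\nu_{n,m})=\operatorname{sign}N_n'(\nu_{n,m})$, which alternates in $m$. This requires the bound
\[
 \Bigl|\sum_{j\ge2}w_{n,j}N_n^{(j)}(\nu)/c_{n,j}\Bigr|<w_{n,1}|N_n'(\nu)|/c_{n,1}.
\]
Write $N_n^{(j)}(\nu)/N_n'(\nu)$ via the partial fractions $N_n'/N_n=\sum_r 1/(x-\nu_r)$. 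The ratio $N_n''(\nu_m)/N_n'(\nu_m)=2\sum_{r\ne m}1/(\nu_m-\nu_r)$ can be large when zeros cluster, so we instead bound partial sums of the normalized derivatives $\sum_{j\le J}N_n^{(j)}(\nu)/(j!\,c_{n,j})$. This is essentially a Taylor-shift comparison: $\sum_j t^jN^{(j)}(\nu)/j!=N(\nu+t)$. If the weights $w_{n,j}$ can be written as (or dominated by) a mixture of such shifts with a monotone profile, Abel summation reduces everything to bounding these partial sums.

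\textbf{Step 3: counting.} Alternating signs at $\nu_{n,1}<\dots<\nu_{n,n}$ give $n-1$ zeros of $M_n$ in the gaps $(\nu_{n,m},\nu_{n,m+1})$. The sign at $\nu_{n,n}$ is compared with $M_n(0)>0$ to place a zero in $(\nu_{n,n},0)$ after undoing the reversal. Together this accounts for all $n$ zeros, so they are simple and negative. Checking which end receives the extra zero (the statement puts $\mu_{n,1}$ left of $\nu_{n,1}$) fixes the orientation of the reversal and the sign of $w_{n,1}$. Small $n$ would be verified directly.

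\textbf{Main obstacle.} The main obstacle is Step 2, controlling the tail $j\ge2$ uniformly in $n$ and at every zero, including the extreme zeros where derivative ratios are largest. I would first try to prove the ratio bounds $w_{n,j+1}c_{n,j}/(w_{n,j}c_{n,j+1})\le\theta_n$ for a small explicit $\theta_n$. I would then combine these with an estimate like $|N_n^{(j)}(\nu)|\le C^{j}\,j!\,|N_n'(\nu)|\,\delta^{-(j-1)}$ in terms of the spacing $\delta$ of the Narayana zeros, known from Jacobi asymptotics. If that is too lossy near the ends, I would fall back on the partial-sum and Abel-summation argument.
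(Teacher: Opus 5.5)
Your outline follows the architecture announced in the abstract, but every step that carries weight is left as an expectation, and the expectations you do state are wrong.

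\textbf{The basis and the positivity of the weights.} The paper expands $\widetilde M_n$ in $x^rN_n^{(r)}(x)/r!$, not in $N_n^{(j)}(x)$. Since $[x^j]\,x^rN_n^{(r)}(x)/r!=\binom jrN_{n,j}$, the coefficients $\eta_{n,r}$ are just the binomial transform of the ratios $\Lambda_{n,j}=[x^j]\widetilde M_n/N_{n,j}$. No detour through generalized Narayana polynomials is needed. In your additive basis this structure is lost: already $\widetilde M_1(x)=1+\tfrac94x=\tfrac94N_1(x)-\tfrac54N_1'(x)$ has a negative first-order weight. Positivity of the $\eta_{n,r}$ is a genuine step, which you do not supply. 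The paper obtains it from the moment representation $c_{n,j}=\binom nj m_{n,j}$ for an explicit weight $f_n$ on $(1,\infty)$. An integration by parts then writes $\eta_{n,r}$ as the integral of a positive function (Lemma~\ref{lem:eta-integrals}).

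\textbf{The weights do not decay, and the first term does not dominate.} Normalize the derivatives so that $A_{n,0}(u)=1$. Then $b_0<b_1<b_2$, and the formulas in Lemma~\ref{lem:weight-shape} show that these three agree to relative order $1/n$. Since $A_{n,1}(u)=2u-\frac2n(1-u)$, the second term exceeds the first at every zero $u>\frac{n+2}{2(n+1)}$, which is about half of them. Moreover $\sum_{j<n}A_{n,j}(u)=\frac{n+1}{(n+3)(1-u)}$ is of order $n^2$ at the largest zero. So no termwise tail bound can work. A spacing-based derivative bound is also hopeless, because the zeros of $N_n$ nearest $0$ are $O(n^{-2})$ apart.

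\textbf{The missing partial-sum bound.} Your fallback, partial sums plus Abel summation, is the paper's route. Its essential input is the uniform bound $V_{n,k}(u)>\frac{n+1}{2(n+3)}$ for all $1\le k\le n$ and every zero $u$ (Theorem~\ref{thm:all-zeros}), and you offer no mechanism for it. The shift heuristic is circular: the sign of a mixture of shifts of $N_n$ at its zeros is exactly what must be proved.

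The paper rewrites the bound as $R_{n,k}(u)<\frac{1+u}2$, where $R_{n,k}=1-\frac{(n+3)(1-u)}{n+1}V_{n,k}$. This remainder satisfies a three-term recurrence and equals $\frac{n+3}{n+1}u^kP_{n-k-1}^{(2,k+2)}(2u-1)/J_n'(u)$ at the zeros. Gaussian quadrature for $w(1-w)\,dw$ then gives the exact identity $\sum_iR_{n,k}(u_i)^2/u_i^k=\frac{(n-k)(n-k+1)(n+3)}{(n+1)(2n-k+3)}$, which handles $k>(n+3)u-2$. An energy argument on the recurrence gives $|R_{n,k}(u)|<\sqrt u$ in the complementary range.

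\textbf{Quantitative input for Abel summation.} Because the weights rise before they fall, positive partial sums are not enough. The lower bound must exceed $1/3$, and you also need $V_{n,2}(u)<3$ and $(b_1-b_0)+3(b_2-b_1)<b_2/3$ to absorb the two negative increments. Monotonicity from $j=2$ on needs the ratio bound $\eta_{n,r}/\eta_{n,r-1}<r/(n-r+2)$. That bound is proved only for $n\ge36$, so $1\le n\le35$ requires a separate exact computation.

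\textbf{Orientation in Step~3.} The comparison is with $\widetilde M_n(0)=1$, not $M_n(0)$. The extra zero of $\widetilde M_n$ lies in $(\nu_{n,n},0)$. The reciprocity $x^nN_n(1/x)=N_n(x)$ then moves it to $(-\infty,\nu_{n,1})$ for $M_n$.
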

\end{minipage}

The negative real zeros of $M_n(x)$ in Theorem~\ref{thm:main} allow us to
apply Br\"and\'en's preservation theorem after the first transform,
settling the infinite log-concavity conjecture of Boros and Moll.

\begin{corollary}\label{thm:infinite-log-concavity}
For integers $n,r\ge1$, the generating polynomial of
$\mathcal L^r(d_0(n),\ldots,d_n(n))$ has only negative real zeros
and all its coefficients are strictly positive. Consequently, for every
integer $n\ge0$, the sequence $(d_i(n))_{i=0}^n$ is infinitely log-concave.
\end{corollary}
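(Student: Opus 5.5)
The statement to be proved is the Corollary; I take Theorem~\ref{thm:main} as given. The approach is to apply Br\"and\'en's preservation theorem \cite{Branden} to the first transform and then induct on $r$. The generating polynomial of $\mathcal L^1(d_0(n),\ldots,d_n(n))$ is exactly $M_n(x)$ from \eqref{eq:first-transform}.

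The first step is the base case $r=1$ for $n\ge1$. I need three facts about $M_n$.

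\begin{itemize}
\item It has degree exactly $n$. Its leading coefficient is $d_n(n)^2-d_{n-1}(n)d_{n+1}(n)=d_n(n)^2>0$.
\item It has only negative real zeros. By Theorem~\ref{thm:main} it has $n$ distinct zeros, all negative, and this accounts for every zero since the degree is $n$.
\item All its coefficients are strictly positive. Write $M_n(x)=c\prod_{j}(x-\mu_{n,j})$ with $c>0$ and every $\mu_{n,j}<0$. Each factor $x+|\mu_{n,j}|$ has positive coefficients, so the expanded product has all coefficients $x^0,\ldots,x^n$ strictly positive.
\end{itemize}

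Hence $\mathcal L\mathbf d$ is a positive sequence of length $n+1$ whose generating polynomial has only negative zeros.

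The inductive step passes from $r$ to $r+1$. Suppose $\mathbf a=\mathcal L^r\mathbf d$ is a positive sequence $(a_0,\ldots,a_n)$ whose generating polynomial has only negative zeros. Br\"and\'en's theorem says the generating polynomial of $\mathcal L\mathbf a$ has only nonpositive real zeros. It remains to rule out a zero at $0$ and to confirm the length stays $n+1$.

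\begin{itemize}
\item The constant term is $a_0^2-a_{-1}a_1=a_0^2>0$, so $0$ is not a zero.
\item The top coefficient is $a_n^2-a_{n-1}a_{n+1}=a_n^2>0$, so the degree remains $n$.
\end{itemize}

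The factorization argument from the base case then again gives only negative zeros and strictly positive coefficients. This completes the induction, and the positivity of every $\mathcal L^j\mathbf d$ is exactly infinite log-concavity for $n\ge1$.

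The case $n=0$ is trivial: the sequence is $(d_0(0))=(1)$, and $\mathcal L$ maps $(a_0)$ to $(a_0^2)$, so every iterate stays positive.

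All the real difficulty lies in Theorem~\ref{thm:main}. The corollary itself only needs care on two bookkeeping points: extending by zero keeps the end coefficients positive, and Br\"and\'en's hypothesis (nonnegative coefficients, only real nonpositive zeros) is met at every stage.
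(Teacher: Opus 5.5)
Your proof is correct and follows the paper's argument essentially verbatim. Theorem~\ref{thm:main} gives the negative zeros of $M_n(x)$, Br\"and\'en's theorem propagates nonpositive real-rootedness, and the fact that the constant and leading coefficients of every iterate are positive squares upgrades this to negative zeros, fixed degree $n$, and strictly positive coefficients. The only thing you leave implicit is the nonnegativity of the original $d_i(n)$ required by the definition, which the paper takes from \eqref{eq:bm-coefficients}.
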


After reversing and normalizing $M_n(x)$, we expand the resulting
polynomial in derivatives of $N_n(x)$. At a zero of $N_n(x)$, it suffices
to prove that a weighted sum of normalized derivatives is positive.
For $n\ge36$, the weights increase in the first two steps and then decrease.
We estimate these two increases and prove a uniform lower bound for the
partial sums of the normalized derivatives. Abel summation gives the
required sign. The remaining degrees $1\le n\le35$ are verified by exact
polynomial division.

This paper is organized as follows. Section~\ref{sec:zero-comparison}
reduces the sign comparison to two estimates. We prove the coefficient
estimate in Section~\ref{sec:coefficient-comparison} and the bound for the
partial sums in Section~\ref{sec:partial-sum-bounds}. The proofs of the
main results are completed in Section~\ref{sec:main-proofs}.

\section{Reduction to two estimates}\label{sec:zero-comparison}

For $n\ge1$, we reverse and normalize the coefficients by setting
\[
 c_{n,j}=\frac{d_{n-j}(n)}{d_n(n)}\qquad(0\le j\le n).
\]
With this normalization, we can express $c_{n,j}/\binom nj$ by a beta integral,
as shown in Lemma~\ref{lem:moments}.
All finite sequences are extended by zero outside their indicated ranges.
For $0\le j,r\le n$, define
\[
 \widetilde M_n(x)=\sum_{j=0}^n(c_{n,j}^2-c_{n,j-1}c_{n,j+1})x^j,
 \qquad \Lambda_{n,j}=\frac{[x^j]\widetilde M_n(x)}{N_{n,j}},\qquad
 \eta_{n,r}=\sum_{j=0}^r(-1)^{r-j}\binom rj\Lambda_{n,j}.
\]
Then $\eta_{n,0}=\Lambda_{n,0}=1$ and
\[
 \widetilde M_n(x)=\frac{x^nM_n(1/x)}{d_n(n)^2}.
\]
The reversal preserves real-rootedness. We will compare signs after
mapping the zeros of $N_n(x)$ to $(0,1)$, then transfer the resulting
zero locations directly to $M_n(x)$.

Binomial inversion gives
\[
 \Lambda_{n,j}=\sum_{r=0}^j\binom jr\eta_{n,r}.
\]
Comparing coefficients and using $[x^j](x^rN_n^{(r)}(x)/r!)=\binom jr N_{n,j}$,
we obtain
\begin{equation}\label{eq:derivative-expansion}
 \widetilde M_n(x)=\sum_{r=0}^n\eta_{n,r}\frac{x^rN_n^{(r)}(x)}{r!}.
\end{equation}
The following estimate is proved in Section~\ref{sec:coefficient-comparison}.

\begin{proposition}\label{prop:moment-ratio}
For every integer $n\ge1$, we have $\eta_{n,r}>0$ for $0\le r\le n$.
Moreover, if $n\ge36$ and $4\le r\le n$,
\begin{equation}\label{eq:moment-ratio}
 \frac{\eta_{n,r}}{\eta_{n,r-1}}<\frac{r}{n-r+2}.
\end{equation}
\end{proposition}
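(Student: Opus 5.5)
The plan is to get an explicit integral representation of $\Lambda_{n,j}$, and hence of $\eta_{n,r}$, from the beta-integral formula for $c_{n,j}/\binom nj$ that the paper announces as Lemma~\ref{lem:moments}. From \eqref{eq:bm-coefficients}, with $k=n-m$ and $i=n-j$,
\[
 c_{n,j}=\frac{\sum_{m}2^{-m}\binom{2m}{m}\binom{2n-m}{n-m}\binom{n-m}{j-m}}{\binom{2n}{n}}.
\]
Dividing by $\binom nj$ should give a moment $c_{n,j}/\binom nj=\int_0^1 t^{j}\,d\sigma_n(t)$, or a close variant, for a positive measure $\sigma_n$ of beta type, something like $t^{a}(1-t)^{b}$.

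Given such a moment form, I would write $c_{n,j}^2-c_{n,j-1}c_{n,j+1}$ as a double integral. Symmetrizing in $(s,t)$ turns it into
\[
 \tfrac12\binom nj^2\iint t^{j-1}s^{j-1}\bigl[st-\rho_j\,(s^2+t^2)/2\bigr]\,d\sigma\,d\sigma,
\]
where $\rho_j=\binom n{j-1}\binom n{j+1}/\binom nj^2$. Dividing by $N_{n,j}=\binom nj^2(1-\rho_j)$ expresses $\Lambda_{n,j}$ as a double moment in $j$ with $j$-dependent rational factors. Taking $r$-th finite differences then writes $\eta_{n,r}$ as an integral of $(st-1)^r$-type kernels, i.e. $(1-u)^r$ with $u=st$, against an explicit weight. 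Positivity of $\eta_{n,r}$ should follow once that weight is shown to be nonnegative.

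If the symmetrization gets messy, a cleaner route is to find a closed hypergeometric form for $\eta_{n,r}$ directly. I would compute $\eta_{n,r}$ for small $n$, guess a single-sum formula (likely a terminating ${}_3F_2$ with positive terms), and certify it by Zeilberger/WZ recurrences in $r$. Positivity is then immediate, and the ratio $\eta_{n,r}/\eta_{n,r-1}$ is a ratio of two positive sums.

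For the inequality \eqref{eq:moment-ratio}, I would compare term by term. Write $\eta_{n,r}=\sum_k T_{n,r,k}$ and check whether
\[
 (n-r+2)\,T_{n,r,k}<r\,T_{n,r-1,k'}
\]
holds for a suitable index shift $k'$. Failing that, I would use the moment form and Chebyshev/Cauchy--Schwarz-type monotonicity: the ratio of consecutive moments of a measure is increasing, so it suffices to bound the ratio at the endpoint $r=n$ and to show that the weight concentrates so that moments decay faster than $r/(n-r+2)$.

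The hypotheses $r\ge4$ and $n\ge36$ indicate that the inequality is genuinely asymptotic, and I expect this to be the main obstacle. Small $r$ fail because the weights increase in the first two steps, so the inequality presumably holds only after a margin of order $1/n$ is available. I would therefore establish it with explicit error bounds: a dominant-term estimate plus a tail bound valid for $n\ge36$, and a direct check of the finitely many boundary cases near $r=4$.
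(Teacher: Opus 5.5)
Your reduction of $\Lambda_{n,j}$ to a double moment is correct and matches the paper: since $\rho_j/(1-\rho_j)=j(n-j)/(n+1)$, one gets $\Lambda_{n,j}=\iint\bigl((xy)^j-\frac{j(n-j)}{2(n+1)}(x-y)^2(xy)^{j-1}\bigr)\,d\mu_n$. After that point the proposal has genuine gaps.

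First, the measure cannot be of beta type on $[0,1]$. One has $c_{n,1}/\binom n1=1+\frac1{2n}>1=c_{n,0}$, whereas the moments of a probability measure on $[0,1]$ are nonincreasing. The paper's weight is $f_n(x)\propto(x-1)^{-1/2}(x+1)^{-2n-3/2}$ on $(1,\infty)$, and this support is what makes $t=xy-1$ positive. On $[0,1]^2$ your kernel $(st-1)^r$ would carry the sign $(-1)^r$.

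Second, and more seriously, even with the correct support the $r$th difference gives
\[
 \eta_{n,r}=\iint\Bigl(t^r-\frac{r(n-r)}{2(n+1)}\,e\,t^{r-1}+\frac{r(r-1)}{2(n+1)}\,e\,t^{r-2}\Bigr)\,d\mu_n,
 \qquad e=(x-y)^2 .
\]
For $1\le r<n$ this integrand is negative on part of $(1,\infty)^2$: at $x=1$ one has $e=t^2$, and the middle term dominates for large $t$. So the step ``positivity follows once the weight is shown to be nonnegative'' fails for the weight you would actually obtain.

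The missing idea is the Pearson equation $((x^2-1)f_n)'=((2n+1)-2nx)f_n$. Integrating by parts against $(x-y)t^{r-1}$ in $x$ and in $y$ and subtracting gives $\iint\bigl(2tg-(2n-r)eg+2eg'\bigr)\,d\mu_n=0$ with $g=t^{r-1}$. Eliminating the $eg'$ term then yields the manifestly positive form $\eta_{n,r}=\frac1{4(n+1)}\iint t^{r-1}\bigl(2(2n-r+2)t+r^2e\bigr)\,d\mu_n$.

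For the ratio bound, none of your suggestions is a workable mechanism. The WZ route presupposes a positive closed form that you have not produced. The log-convexity argument fails on two counts:
\begin{itemize}
\item $\eta_{n,r}$ is not the moment sequence of a fixed measure, since its integrand has $r$-dependent coefficients.
\item Even if $\eta_{n,r}/\eta_{n,r-1}$ increased in $r$, a bound at $r=n$ would say nothing for small $r$. The target $r/(n-r+2)$ is itself increasing and is smallest, $4/(n-2)$, at $r=4$.
\end{itemize}
Nor are the cases near $r=4$ finitely many, because $n$ is unbounded.

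The paper instead derives further exact integration-by-parts relations among $I_j=\iint t^j\,d\mu_n$, $E_j=\iint e\,t^j\,d\mu_n$ and the mixed moment $\iint y\,t^{r-1}\,d\mu_n$. These give
\[
 r\eta_{n,r-1}-(n-r+2)\eta_{n,r}
 =\frac{2P_r(h)I_{r-1}+(r-1)(r-2)Q_r(h)E_{r-3}}{2(n+1)\ell(\ell+1)^2},
 \qquad h=n-r,\ \ell=2n-r,
\]
with explicit polynomials satisfying $Q_r(h)\ge0$ for $r\ge4$, and $P_r(h)>0$ for $r\ge9$ and, when $4\le r\le8$, for $h\ge28$. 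This is exactly where $n\ge36$ enters. Without such an identity, or an equivalent exact handle on $\eta_{n,r}$, both parts of the proposition remain unproved.
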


We use the normalization
$P_m^{(\alpha,\beta)}(1)=\binom{m+\alpha}{m}$ for Jacobi polynomials
and set
\[
 J_n(w)=\frac{P_n^{(1,1)}(2w-1)}{n+1}.
\]
The identities used below for Jacobi polynomials with parameters greater
than $-1$ can be found in \cite{DLMF,Szego}.

Set
\[
 \begin{aligned}
 \widehat M_n(w)&=(1-w)^n\widetilde M_n\!\left(-\frac w{1-w}\right),\\
 \widehat N_n(w)&=(1-w)^nN_n\!\left(-\frac w{1-w}\right).
 \end{aligned}
\]
Since $\deg\widetilde M_n(x)=\deg N_n(x)=n$, these define polynomials of degree
at most $n$, with their values at $w=1$ given by continuity.

By the formula for $N_{n,k}$, $N_n(x)={}_2F_1(-n,-n-1;2;x)$.
We use Pfaff's transformation and the
hypergeometric formula for Jacobi polynomials to obtain
\[
 \widehat N_n(w)={}_2F_1(-n,n+3;2;w)=(-1)^nJ_n(w).
\]
By the orthogonality theorem for Jacobi polynomials, $J_n(w)$ has $n$ simple
zeros in $(0,1)$.

Write
$(a)_r=a(a+1)\cdots(a+r-1)$ and
$\fall{a}{r}=a(a-1)\cdots(a-r+1)$ for rising and falling factorials.

Define the polynomials $A_{n,k}(w)$ by $A_{n,-1}(w)=0$,
$A_{n,0}(w)=1$, and
\begin{equation}\label{eq:A-recurrence}
 A_{n,k+1}(w)=\left(2w-\frac{k+2}{n-k}(1-w)\right)A_{n,k}(w)
           -wA_{n,k-1}(w)\qquad(0\le k<n).
\end{equation}

\begin{lemma}\label{lem:jacobi-zero-identities}
For $n\ge1$,
\begin{equation}\label{eq:narayana-jacobi}
 \widehat N_n(w)=(-1)^nJ_n(w),\qquad A_{n,n}(w)=(n+1)J_n(w).
\end{equation}
At each zero $u$ of $J_n(w)$, put $x=-u/(1-u)$. Then
\begin{equation}\label{eq:derivative-values}
 A_{n,k}(u)=\frac{(1-u)^n x^{k+1}N_n^{(k+1)}(x)}
 {\fall{n}{k}\,u(1-u)\widehat N_n'(u)}
 \qquad(0\le k\le n),
\end{equation}
and
\begin{equation}\label{eq:zero-evaluation}
 \widehat M_n(u)=u(1-u)\widehat N_n'(u)
       \sum_{k=0}^{n-1}\frac{\eta_{n,k+1}}{k+1}\binom nk A_{n,k}(u).
\end{equation}
\end{lemma}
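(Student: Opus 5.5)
The plan is to derive a three-term recurrence for the normalized derivative values $D_k(x)=x^{k}N_n^{(k)}(x)/\fall{n}{k}$ from the hypergeometric differential equation satisfied by $N_n(x)={}_2F_1(-n,-n-1;2;x)$, and then to observe that, after the substitution $x=-w/(1-w)$, this recurrence becomes \eqref{eq:A-recurrence}. The first identity in \eqref{eq:narayana-jacobi} has already been established above via Pfaff's transformation, so only the statements involving $A_{n,k}$ need proof.

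The hypergeometric equation for $y=N_n$ reads
\[
x(1-x)y''+\bigl(2-(-2n+1+2)x\bigr)y'+n(-n-1)\cdots
\]
with parameters $a=-n$, $b=-n-1$, $c=2$. Differentiating it $k$ times gives
\[
x(1-x)y^{(k+2)}+\bigl(c+k-(a+b+2k+1)x\bigr)y^{(k+1)}-(a+k)(b+k)y^{(k)}=0 .
\]
Multiply by $x^{k+1}$ and express everything in terms of $T_k=x^kN_n^{(k)}(x)$. This yields $(1-x)T_{k+2}+(c+k-(a+b+2k+1)x)T_{k+1}-x(a+k)(b+k)T_k=0$, with $(a+k)(b+k)=(n-k)(n+1-k)$. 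Dividing by suitable falling factorials and using $x=-u/(1-u)$, so that $1-x=1/(1-u)$, the coefficients should reduce exactly to $2u-\frac{k+2}{n-k}(1-u)$ and $-u$.

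To carry this out, I set
\[
B_k=\frac{(1-u)^n T_{k+1}}{\fall nk\,u(1-u)\widehat N_n'(u)}
\]
and check three things.

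First, the initial values. For $k=0$ we have $T_1=xN_n'(x)$. Differentiating $\widehat N_n(w)=(1-w)^nN_n(x(w))$ with $dx/dw=-1/(1-w)^2$ at a zero $u$ of $N_n$ gives $\widehat N_n'(u)=-(1-u)^{n-2}N_n'(x)$. Hence
\[
u(1-u)\widehat N_n'(u)=-u(1-u)^{n-1}N_n'(x)=(1-u)^n xN_n'(x),
\]
so $B_0=1=A_{n,0}$. For $k=-1$, $T_0=N_n(x)=0$ at the zero, which matches $A_{n,-1}=0$.

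Second, the recurrence. Shifting the index in the differentiated ODE and simplifying gives \eqref{eq:A-recurrence} for $B_k$. By induction, $B_k=A_{n,k}(u)$, which is \eqref{eq:derivative-values}.

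Third, the identity $A_{n,n}=(n+1)J_n$. Both sides are polynomials of degree at most $n$, and by \eqref{eq:derivative-values} they agree at $u$ up to the value $B_n\propto x^{n+1}N_n^{(n+1)}=0$. So $A_{n,n}$ vanishes at all $n$ zeros of $J_n$, giving $A_{n,n}=C\,J_n$, and $C$ is fixed by comparing leading coefficients from the recurrence. Since $\deg A_{n,k}\le k$ and the leading coefficient is multiplied by $2+\frac{k+2}{n-k}=\frac{2n-k+2}{n-k}$ at each step, the product telescopes. The Jacobi leading coefficient of $P_n^{(1,1)}(2w-1)$ is $\binom{2n+2}{n}$. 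I expect the main bookkeeping burden to lie here, in matching these constants and in the exact coefficient cancellation in the second step.

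Finally, for \eqref{eq:zero-evaluation} I will evaluate \eqref{eq:derivative-expansion} at $x$, which gives $\widehat M_n(u)=(1-u)^n\sum_{r}\eta_{n,r}T_r/r!$. The $r=0$ term vanishes because $T_0=N_n(x)=0$. Substituting $T_{k+1}=\fall nk\,u(1-u)\widehat N_n'(u)A_{n,k}(u)/(1-u)^n$ and using $\fall nk/(k+1)!=\binom nk/(k+1)$ gives the stated sum over $0\le k\le n-1$. This step is routine once the previous two are in place.
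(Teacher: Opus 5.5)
Your proposal is correct and follows the paper's proof essentially step for step: the $k$-times differentiated hypergeometric equation, multiplied by $x^{k+1}$ and rewritten with $1-x=1/(1-u)$, yields \eqref{eq:A-recurrence} for your $B_k$. The initial value comes from $(1-u)^n xN_n'(x)=u(1-u)\widehat N_n'(u)$, and \eqref{eq:zero-evaluation} follows by reindexing \eqref{eq:derivative-expansion}.

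The only deviation is how you fix the constant in $A_{n,n}=CJ_n$. You use leading coefficients, $\prod_{k=0}^{n-1}\frac{2n-k+2}{n-k}=\binom{2n+2}{n}=[w^n]P_n^{(1,1)}(2w-1)$, which checks out. The paper instead evaluates at $w=1$, where the recurrence gives $A_{n,k}(1)=k+1$ and $J_n(1)=1$; this avoids the Jacobi leading-coefficient formula. Also, the coefficient in your undifferentiated equation should read $2+2nx$, though the differentiated form you actually use is correct.
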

\begin{proof}
We write the hypergeometric differential equation for $N_n(x)$ and its
$r$th derivative as
\begin{align*}
 x(1-x)N_n''(x)+(2+2nx)N_n'(x)-n(n+1)N_n(x)&=0,\\
 x(1-x)N_n^{(r+2)}(x)+
 (r+2+(2n-2r)x)N_n^{(r+1)}(x)
 -(n-r)(n-r+1)N_n^{(r)}(x)&=0.
\end{align*}
At a zero $u$ of $J_n(w)$, set $x=-u/(1-u)$. The identity
\[
 (1-u)^nx N_n'(x)=u(1-u)\widehat N_n'(u)
\]
shows that the quotient in~\eqref{eq:derivative-values} equals $1$
when $k=0$. With $N_n(x)=0$ and $\widehat N_n'(u)\ne0$, the
differentiated equation shows that these quotients satisfy
\eqref{eq:A-recurrence}, starting with $A_{n,-1}(u)=0$ and
$A_{n,0}(u)=1$. Induction proves~\eqref{eq:derivative-values}.
The identity for $\widehat N_n(w)$ was established above.
The $r=0$ term in~\eqref{eq:derivative-expansion} vanishes.
Reindexing by $r=k+1$ and using
\[
 \frac{\fall{n}{k}}{(k+1)!}=\frac1{k+1}\binom nk
\]
with the formula for $A_{n,k}(u)$ gives~\eqref{eq:zero-evaluation}.
Since $N_n^{(n+1)}(x)=0$, we have $A_{n,n}(u)=0$ at every zero of $J_n(w)$.
By induction in~\eqref{eq:A-recurrence},
\[
 \deg A_{n,k}(w)\le k,\qquad A_{n,k}(1)=k+1\qquad(0\le k\le n).
\]
The polynomials $A_{n,n}(w)$ and $(n+1)J_n(w)$ agree, since both
have degree at most $n$, vanish at the $n$ zeros of $J_n(w)$, and
take the value $n+1$ at $w=1$.
\end{proof}

Define the positive weights
\[
 b_j=\frac{\eta_{n,j+1}}{j+1}\binom nj\qquad(0\le j<n).
\]
For $n\ge36$, \eqref{eq:zero-evaluation} reduces the sign comparison
at a zero $u$ of $J_n(w)$ to $\sum_{j=0}^{n-1}b_jA_{n,j}(u)>0$.
To apply summation by parts, define the partial sums
\[
 V_{n,k}(w)=\sum_{j=0}^{k-1}A_{n,j}(w)\qquad(0\le k\le n),
\]
with $V_{n,0}(w)=0$.
The required lower bound for $V_{n,k}(u)$ is proved in
Section~\ref{sec:partial-sum-bounds}.

\begin{theorem}\label{thm:all-zeros}
For every integer $n\ge36$, every zero $u$ of $J_n(w)$, and
every $1\le k\le n$, one has $V_{n,k}(u)>(n+1)/(2(n+3))$.
\end{theorem}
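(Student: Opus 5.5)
The recurrence \eqref{eq:A-recurrence} is a three-term recurrence with coefficient $w$ on $A_{n,k-1}$. This suggests substituting $w=\cos^2\theta$ (equivalently $2w-1=\cos 2\theta$) and rescaling $A_{n,k}(w)=w^{k/2}\,a_k$. The recurrence then becomes
\[
 a_{k+1}=\Bigl(2\sqrt w-\tfrac{k+2}{n-k}\tfrac{1-w}{\sqrt w}\Bigr)a_k-a_{k-1},
\]
which is a perturbed Chebyshev recurrence. My plan is to find a closed form for $A_{n,k}$ at a zero $u$ of $J_n$, in terms of Jacobi polynomials, and then sum it.

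By \eqref{eq:derivative-values}, $A_{n,k}(u)$ is proportional to $x^{k+1}N_n^{(k+1)}(x)/\fall nk$. Derivatives of ${}_2F_1(-n,-n-1;2;x)$ are again hypergeometric, namely multiples of ${}_2F_1(-n+k+1,-n+k;k+3;x)$, so after the Pfaff map each one is a Jacobi polynomial $P^{(k+2,\,\cdot)}_{n-k-1}(2w-1)$ times a power of $1-w$. The partial sum $V_{n,k}(u)=\sum_{j<k}A_{n,j}(u)$ telescopes through the recurrence. Summing \eqref{eq:A-recurrence} over $j$ gives
\[
 (1-u)V_{n,k}(u)\approx A_{n,k}(u)-uA_{n,k-1}(u)+\text{correction terms from }\tfrac{j+2}{n-j}(1-u)A_{n,j}(u).
\]
I would instead use the derivative form directly. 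Summing $x^{j+1}N^{(j+1)}_n/\fall nj$ with the Taylor identity $\sum_r x^rN^{(r)}(x)/r!=N_n(2x)$ suggests expressing $V_{n,k}$ through a Jacobi polynomial of shifted parameters, or through an integral representation such as a beta integral analogous to Lemma~\ref{lem:moments}.

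The key steps, in order:
\begin{enumerate}
 \item Establish the exact expression for $V_{n,k}(u)$ as $V_{n,k}(1)$ times a ratio of Jacobi-type values, using the normalization $A_{n,k}(1)=k+1$, which gives $V_{n,k}(1)=k(k+1)/2$.
 \item Show that $V_{n,k}(u)$ is a weighted average that stays positive. For small $k$ this is direct: $V_{n,1}=1$, and $V_{n,2}=1+2u-\tfrac{2}{n}(1-u)$, which is at least $1-2/n$. Both exceed $(n+1)/(2(n+3))\approx 1/2$.
 \item For larger $k$, use the Sturm/Christoffel–Darboux structure. The $A_{n,j}$ form an orthogonal-type sequence for a modified Jacobi weight. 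At a zero $u$ of $A_{n,n}$, the Christoffel–Darboux kernel gives a positive quadratic form. The linear sum $V$ can be written as $\sum_j A_{n,j}(u)A_{n,j}(1)\cdot\gamma_j$ with positive $\gamma_j$, possibly after comparing with the kernel $K_k(u,1)$. One then bounds $K_k(u,1)$ below using interlacing of zeros of $A_{n,k}$ with those of $A_{n,n}$.
 \item Obtain the precise constant $(n+1)/(2(n+3))$ from the endpoint case $k=n$. Here $V_{n,n}(u)$ is evaluated exactly via \eqref{eq:zero-evaluation}-type identities, for instance by relating $\sum_j A_{n,j}$ to $J_n'(u)$ and $J_{n-1}(u)$. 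I would then prove that $V_{n,k}(u)-V_{n,n}(u)$, or a suitable normalized version, is monotone in $k$ for most of the range.
\end{enumerate}

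The main obstacle is the uniformity over all zeros $u$, especially those near $0$. There $w=u\approx 0$, the factor $(1-u)(k+2)/(n-k)$ dominates, and the $A_{n,j}(u)$ oscillate with alternating sign and grow for $k$ near $n$. For these zeros I would first try a Liouville–Green/Prüfer-type amplitude bound. Writing $A_{n,j}(u)=R_j\cos\phi_j$ with a slowly varying amplitude $R_j$, the oscillating partial sums are controlled by $R_k/\sin(\Delta\phi)$. This shows the tail contributes less than about $1/2$ in magnitude against the leading term $A_{n,0}=1$. The threshold $n\ge36$ would then enter when explicit constants are absorbed, for example through bounds like $(k+2)/(n-k)$ for $k\le n/2$, combined with separate treatment of $k>n/2$ via the reversed recurrence from $A_{n,n}(u)=0$.
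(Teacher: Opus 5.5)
\textbf{There is a genuine gap.} Your checks for $k=1,2$ are correct, and so is the link between $A_{n,k}(u)$ and Jacobi polynomials. But the proposal never proves what the theorem actually needs: a lower bound for $V_{n,k}(u)$, uniform in $1\le k\le n$ and in the zero $u$, with explicit constants. Several steps also fail as stated.
\begin{itemize}
\item Step~3: \eqref{eq:A-recurrence} is not of Favard type, since the coefficient of $A_{n,k-1}$ is $w$ rather than a constant. So no Christoffel--Darboux identity in $j$ comes with it. Even for a genuine kernel, only the diagonal $K_k(u,u)$ has a sign, so nothing lets you bound $K_k(u,1)$ below.
\item Step~4: the monotonicity is false. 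We have $V_{n,k}(u)-V_{n,n}(u)=-\sum_{j\ge k}A_{n,j}(u)$, and for the small zeros the $A_{n,j}(u)$ oscillate in sign: $A_{n,0}=1$, while $A_{n,1}(u)=2u-2(1-u)/n<0$ for $u<1/(n+1)$. The constant also does not come from $k=n$, since $V_{n,n}(u)=(n+1)/((n+3)(1-u))$ is more than twice the target.
\item The Taylor identity does not sum $V_{n,k}$, because the weights in \eqref{eq:derivative-values} are $1/\fall{n}{j}$, not $1/(j+1)!$.
\item The Pr\"ufer heuristic (``the tail contributes less than about $1/2$'') cannot work unless it uses $J_n(u)=0$ essentially. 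At $w=0$ one has $A_{n,j}(0)=(-1)^j(j+1)/\binom nj$ and $V_{n,n}(0)=(n+1)(1-(-1)^n)/(n+3)$, which vanishes for even $n$. Yet the smallest zero of $J_n$ is only $O(n^{-2})$ away from $0$.
\end{itemize}

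\textbf{The missing idea is the telescoping you set aside.} The correction terms cancel exactly once you weight by $n-j+1$. From \eqref{eq:A-recurrence},
\[
 (n-k+1)A_{n,k}(w)-w(n-k)A_{n,k-1}(w)=(n+1)-(n+3)(1-w)V_{n,k}(w)=(n+1)R_{n,k}(w),
\]
say. The theorem then becomes $R_{n,k}(u)<(1+u)/2$. Here $R_{n,k}$ satisfies its own three-term recurrence with $R_{n,0}=1$ and $R_{n,n}(u)=J_n(u)=0$, so no oscillating partial sums remain. Note that the exact formula for $V_{n,k}(u)$ is affine in $R_{n,k}(u)$, not ``$V_{n,k}(1)$ times a ratio'', and a closed form alone would not bound it anyway.

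The paper then splits the range in two.
\begin{itemize}
\item For $k\le(n+3)u-2$, a quadratic Lyapunov function for the $R$-recurrence gives $|R_{n,k}(u)|<\sqrt u<(1+u)/2$.
\item For $k>(n+3)u-2$, running the recurrence backward from $R_{n,n}(u)=0$ gives $R_{n,k}(u)=\frac{n+3}{n+1}u^kP^{(2,k+2)}_{n-k-1}(2u-1)/J_n'(u)$. Gauss quadrature for $w(1-w)\,dw$ at the zeros of $J_n$ then gives the exact identity $\sum_i R_{n,k}(u_i)^2u_i^{-k}=\frac{(n-k)(n-k+1)(n+3)}{(n+1)(2n-k+3)}$. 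Combined with $u<(k+2)/(n+3)$, this yields $|R_{n,k}(u)|<1/2$ for $3\le k<n$ and $n\ge36$.
\end{itemize}
That quadrature identity is what gives uniformity over all zeros, including the small ones you flag as the main obstacle. Nothing in your plan plays its role.
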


Since
\[
 \frac{b_j}{b_{j-1}}
 =\frac{n-j+1}{j+1}\frac{\eta_{n,j+1}}{\eta_{n,j}},
\]
Proposition~\ref{prop:moment-ratio} implies $b_j<b_{j-1}$
for $n\ge36$ and $3\le j<n$.
\begin{lemma}\label{lem:weight-shape}
For $n\ge36$, the weights satisfy
\[
 0<b_0<b_1<b_2>b_3>\cdots>b_{n-1}>0,
 \qquad
 (b_1-b_0)+3(b_2-b_1)<\frac{b_2}{3}.
\]
\end{lemma}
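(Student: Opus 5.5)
The decreasing tail $b_2>b_3>\cdots>b_{n-1}>0$ is already in hand. Proposition~\ref{prop:moment-ratio} gives $\eta_{n,r}>0$, so every $b_j$ is positive. The displayed ratio formula combined with \eqref{eq:moment-ratio} gives $b_j<b_{j-1}$ for $3\le j<n$. What remains concerns only the first three weights $b_0,b_1,b_2$, which are
\[
 b_0=\eta_{n,1},\qquad b_1=\frac{n}{2}\eta_{n,2},\qquad b_2=\frac{n(n-1)}{6}\eta_{n,3}.
\]
So I need explicit formulas, or at least sharp asymptotics with controlled error, for $\eta_{n,1},\eta_{n,2},\eta_{n,3}$. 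These are finite alternating combinations of $\Lambda_{n,0},\dots,\Lambda_{n,3}$, and hence of $c_{n,0},\dots,c_{n,4}$, i.e.\ of the top five Boros--Moll coefficients $d_n(n),\dots,d_{n-4}(n)$ divided by $d_n(n)$.

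\textbf{Step 1.} Compute $c_{n,j}$ for $j\le4$ in closed form from \eqref{eq:bm-coefficients}. Here $d_n(n)=2^{-n}\binom{2n}{n}$. For small $j$, each $d_{n-j}(n)$ is a sum over $k=n-j,\dots,n$ of at most $j+1$ terms, and each term is $d_n(n)$ times a rational function of $n$. Hence every $c_{n,j}$ is an explicit rational function of $n$, polynomial of degree $j$ up to simple denominators (the beta-integral expression of Lemma~\ref{lem:moments} could serve as a check). From these I form
\[
 \Lambda_{n,j}=\frac{c_{n,j}^2-c_{n,j-1}c_{n,j+1}}{N_{n,j}},\qquad j\le3,
\]
and then
\[
 \eta_{n,1}=\Lambda_{n,1}-1,\qquad \eta_{n,2}=\Lambda_{n,2}-2\Lambda_{n,1}+1,\qquad \eta_{n,3}=\Lambda_{n,3}-3\Lambda_{n,2}+3\Lambda_{n,1}-1,
\]
each as a rational function of $n$.

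\textbf{Step 2.} Reduce the two claims to rational-function inequalities.
\begin{itemize}
\item $b_1>b_0$ becomes $n\eta_{n,2}>2\eta_{n,1}$.
\item $b_2>b_1$ becomes $(n-1)\eta_{n,3}>3\eta_{n,2}$.
\item The last inequality is equivalent to $\tfrac{2}{3}b_2<2b_1+b_0$. Indeed $(b_1-b_0)+3(b_2-b_1)=3b_2-2b_1-b_0$, and the condition $3b_2-2b_1-b_0<b_2/3$ rearranges to $\tfrac83b_2<2b_1+b_0$. So the correct target is $8b_2<6b_1+3b_0$.
\end{itemize}
Clearing the positive denominators, each inequality becomes the positivity of an explicit polynomial $Q(n)$. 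I would verify this for $n\ge36$ by substituting $n=36+t$ and checking that all coefficients in $t$ are positive. If some coefficients are negative, I would bound the negative terms by the leading term for $t\ge0$.

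\textbf{Main obstacle.} The delicate inequality is $8b_2<6b_1+3b_0$. Heuristically, the $\eta_{n,r}$ should decay roughly geometrically with ratio of order $1/n$, so that $b_0,b_1,b_2$ are all of comparable size. In that case the inequality is a genuine comparison of leading constants, and the restriction $n\ge36$ presumably comes from exactly here. First I would extract the leading asymptotics of $\eta_{n,1},\eta_{n,2},\eta_{n,3}$ to confirm that the limiting constants satisfy the strict inequalities with some margin. Then I would do the exact polynomial check. Because the alternating sums defining $\eta_{n,r}$ involve heavy cancellation, the closed forms must be exact rather than approximate; this is where care (or computer algebra) is needed.
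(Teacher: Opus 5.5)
Your proposal is correct and follows the paper's proof. The tail $b_2>\cdots>b_{n-1}>0$ comes from Proposition~\ref{prop:moment-ratio}, and the three initial inequalities are checked from exact rational formulas obtained via Lemma~\ref{lem:moments}: the paper finds $(b_1-b_0)/b_2=6(2n-3)(n^2+3n-1)/((4n-1)T)$ and $(b_2-b_1)/b_2=3(5n-1)(7n-3)/((4n-1)T)$ with $T=8n^3-19n^2+48n-27$, so your target $8b_2<6b_1+3b_0$ reduces to $T-9(n^2+28n-15)=4(2n-1)(n^2-3n-27)>0$. One correction to your heuristic: this already holds for $n\ge7$, so the hypothesis $n\ge36$ enters this lemma only through the ratio bound \eqref{eq:moment-ratio}, not through the first three weights.
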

\begin{proof}
Put $T=8n^3-19n^2+48n-27>0$. Lemma~\ref{lem:moments} and the definitions of $\Lambda_{n,j}$ and $\eta_{n,r}$ give
\[
 \frac{b_1-b_0}{b_2}
 =\frac{6(2n-3)(n^2+3n-1)}{(4n-1)T}>0,\qquad
 \frac{b_2-b_1}{b_2}
 =\frac{3(5n-1)(7n-3)}{(4n-1)T}>0.
\]
Finally,
\[
 \frac{(b_1-b_0)+3(b_2-b_1)}{b_2}
 =\frac{3(n^2+28n-15)}{T}<\frac13,
\]
because
\[
 T-9(n^2+28n-15)=4(2n-1)(n^2-3n-27)>0.\qedhere
\]
\end{proof}

We now deduce the required sign comparison.

\begin{proposition}\label{prop:reduction}
Let $n\ge36$. At every zero $u$ of $J_n(w)$,
\begin{equation}\label{eq:zero-sign}
 \frac{\widehat M_n(u)}{u(1-u)\widehat N_n'(u)}
 >\frac{n-3}{6(n+3)}b_2>0.
\end{equation}
Writing $0<u_1<\cdots<u_n<1$ for the zeros of $J_n(w)$, the polynomial
$\widehat M_n(w)$ has exactly one simple zero in each of
$(0,u_1),(u_1,u_2),\ldots,(u_{n-1},u_n)$.
\end{proposition}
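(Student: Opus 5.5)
By \eqref{eq:zero-evaluation}, at a zero $u$ of $J_n(w)$ the quotient on the left of \eqref{eq:zero-sign} equals $S(u)=\sum_{j=0}^{n-1}b_jA_{n,j}(u)$. The plan is to apply Abel summation with the partial sums $V_{n,k}$. Since $A_{n,j}=V_{n,j+1}-V_{n,j}$ and $V_{n,0}=0$, we get
\[
 S(u)=b_{n-1}V_{n,n}(u)+\sum_{k=1}^{n-1}(b_{k-1}-b_k)V_{n,k}(u).
\]

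We split this sum according to the shape of the weights in Lemma~\ref{lem:weight-shape}.

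\emph{Terms with $k\ge3$.} For these terms $b_{k-1}-b_k>0$. Theorem~\ref{thm:all-zeros} gives $V_{n,k}(u)>c:=(n+1)/(2(n+3))$. Together with $b_{n-1}>0$ and $V_{n,n}(u)>c$, the tail telescopes:
\[
 b_{n-1}V_{n,n}+\sum_{k=3}^{n-1}(b_{k-1}-b_k)V_{n,k}>c\,b_2 .
\]

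\emph{Terms with $k=1,2$.} Here the coefficients $b_0-b_1$ and $b_1-b_2$ are negative, so we need upper bounds on $V_{n,1}$ and $V_{n,2}$. These are explicit: $V_{n,1}=A_{n,0}=1$. By \eqref{eq:A-recurrence}, $A_{n,1}(u)=2u-\frac{2}{n}(1-u)<2$ for $u\in(0,1)$, so $V_{n,2}(u)<3$. The negative contribution is therefore at least $-(b_1-b_0)-3(b_2-b_1)$, which exceeds $-b_2/3$ by Lemma~\ref{lem:weight-shape}.

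Combining the two parts,
\[
 S(u)>b_2\Bigl(\frac{n+1}{2(n+3)}-\frac13\Bigr)=b_2\,\frac{3(n+1)-2(n+3)}{6(n+3)}=\frac{n-3}{6(n+3)}b_2 .
\]
This is exactly \eqref{eq:zero-sign}, and the bound is positive because $n\ge36$ and $b_2>0$.

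For the zero-location statement, \eqref{eq:zero-sign} shows that $\widehat M_n(u_i)$ has the same sign as $\widehat N_n'(u_i)$, since $u_i(1-u_i)>0$. The zeros $u_i$ of $\widehat N_n=(-1)^nJ_n$ are simple, so the signs of $\widehat N_n'(u_i)$ alternate in $i$. Hence $\widehat M_n$ changes sign on each interval $(u_i,u_{i+1})$ for $1\le i\le n-1$, which gives $n-1$ zeros.

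One more zero is needed in $(0,u_1)$, and for this we compare with the value at $w=0$. We have $\widehat M_n(0)=\widetilde M_n(0)=\Lambda_{n,0}=1>0$ and $\widehat N_n(0)=N_n(0)=1>0$. Since $\widehat N_n$ is positive on $[0,u_1)$ and $u_1$ is a simple zero, $\widehat N_n'(u_1)<0$. Therefore $\widehat M_n(u_1)<0<\widehat M_n(0)$, and $\widehat M_n$ has a zero in $(0,u_1)$.

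This accounts for at least $n$ zeros. Since $\deg\widehat M_n\le n$ and $\widehat M_n\not\equiv0$, there are exactly $n$ zeros, each simple and exactly one in each interval.

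The step I expect to be the main obstacle is only bookkeeping: getting the signs in the summation by parts right, and checking the crude bound $A_{n,1}(u)<2$ together with $\widehat M_n(0)=1$. The substantive inputs, Theorem~\ref{thm:all-zeros} and Lemma~\ref{lem:weight-shape}, are already available.
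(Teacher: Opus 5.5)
Your proof is correct and follows the paper's argument essentially step by step: summation by parts, Theorem~\ref{thm:all-zeros} for the telescoping tail $k\ge3$, the explicit bounds $V_{n,1}(u)=1$ and $V_{n,2}(u)<3$ combined with Lemma~\ref{lem:weight-shape}, and then sign alternation plus the degree count. The only cosmetic difference is that you fix the sign pattern at $u_1$ through $\widehat N_n(0)=N_n(0)=1$, whereas the paper uses the positive leading coefficient of $J_n(w)$.
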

\begin{proof}
Fix a zero $u$ of $J_n(w)$, and put $c=(n+1)/(2(n+3))$.
Theorem~\ref{thm:all-zeros} gives $V_{n,k}(u)>c$ for $1\le k\le n$.
Summation by parts gives
\[
 \sum_{j=0}^{n-1}b_jA_{n,j}(u)
 =b_{n-1}V_{n,n}(u)
  +\sum_{j=1}^{n-1}(b_{j-1}-b_j)V_{n,j}(u).
\]
By Lemma~\ref{lem:weight-shape}, only the terms with $j=1,2$
have negative coefficients; the positive coefficients sum to $b_2$.
Also, $V_{n,1}(u)=1$ and
$V_{n,2}(u)=1-2/n+2(n+1)u/n<3$. Hence
\[
 \sum_{j=0}^{n-1}b_jA_{n,j}(u)
 \ge cb_2-(b_1-b_0)-3(b_2-b_1)
 >b_2\left(c-\frac13\right)>0,
\]
where $c-1/3=(n-3)/(6(n+3))>0$.
Substituting into~\eqref{eq:zero-evaluation} proves~\eqref{eq:zero-sign}.
Write the zeros of $J_n(w)$ as $0<u_1<\cdots<u_n<1$.
Since all its zeros lie below $1$ and $J_n(1)=1$, the leading
coefficient of $J_n(w)$ is positive. Hence
$\operatorname{sgn}J_n'(u_i)=(-1)^{n-i}$.
Together with $\widehat N_n'(u_i)=(-1)^nJ_n'(u_i)$, this gives
$\operatorname{sgn}\widehat M_n(u_i)=(-1)^i$.
As $\widehat M_n(0)=1$, there is a zero in each of
$(0,u_1),(u_1,u_2),\ldots,(u_{n-1},u_n)$.
Since the polynomial has degree at most $n$, these are all its zeros
and each is simple.
\end{proof}

\section{Coefficient estimates}\label{sec:coefficient-comparison}

Fix an integer $n\ge1$. Define the weight
\[
 f_n(x)=\frac{2^{2n+1}}{\mathrm B(1/2,2n+1)}
 (x-1)^{-1/2}(x+1)^{-2n-3/2}\qquad(x>1),
\]
and its moments
\[
 m_{n,j}=\int_1^\infty x^j f_n(x)\,dx\qquad(0\le j\le n+1).
\]

\begin{lemma}\label{lem:moments}
For $0\le j\le n+1$,
\[
 m_{n,j}=\sum_{r=0}^j\binom jr
             \frac{2^r(1/2)_r}{\fall{(2n)}{r}}.
\]
Moreover, $c_{n,j}=\binom nj m_{n,j}$ for $0\le j\le n$.
\end{lemma}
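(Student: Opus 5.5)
The proof has two independent parts: compute the moments $m_{n,j}$ by expanding around $x=1$, then match the Boros--Moll formula \eqref{eq:bm-coefficients} term by term after reindexing.

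\emph{Moments.} Write $x^j=(1+(x-1))^j=\sum_{r}\binom jr(x-1)^r$. This reduces the first claim to the identity
\[
 \int_1^\infty (x-1)^r f_n(x)\,dx=\frac{2^r(1/2)_r}{\fall{(2n)}{r}}\qquad(0\le r\le n+1).
\]
To evaluate the left side, I put $y=x-1$ and use the standard Mellin--beta integral
\[
 \int_0^\infty y^{a-1}(y+2)^{-a-b}\,dy=2^{-b}\mathrm B(a,b),\qquad a=r+\tfrac12,\quad b=2n+1-r .
\]
This applies because $b>0$ for $r\le n+1<2n+1$, so all the moments converge. For $r=0$ the integral equals $2^{-2n-1}\mathrm B(1/2,2n+1)$, which confirms the normalising constant in $f_n$, i.e.\ $m_{n,0}=1$. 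For general $r$, the ratio to the $r=0$ case is
\[
 2^{r}\frac{\mathrm B(r+1/2,2n+1-r)}{\mathrm B(1/2,2n+1)}.
\]
Both beta functions have the same argument sum $2n+3/2$, so the ratio equals
\[
 2^r\,\frac{\Gamma(r+1/2)}{\Gamma(1/2)}\cdot\frac{\Gamma(2n+1-r)}{\Gamma(2n+1)}=\frac{2^r(1/2)_r}{\fall{(2n)}{r}},
\]
as required.

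\emph{Coefficients.} I use \eqref{eq:bm-coefficients} with $i=n-j$ and reindex by $k=n-r$. The binomial $\binom{k}{n-j}$ becomes $\binom{n-r}{j-r}$, and this vanishes unless $r\le j$. Also $d_n(n)=4^{-n}2^n\binom{2n}{n}$. Hence
\[
 c_{n,j}=\sum_{r=0}^j 2^{-r}\binom{2r}{r}\frac{\binom{2n-r}{n}}{\binom{2n}{n}}\binom{n-r}{j-r}.
\]
On the target side I use $\binom nj\binom jr=\binom nr\binom{n-r}{j-r}$. It then suffices to check the termwise identity
\[
 2^{-r}\binom{2r}{r}\frac{\binom{2n-r}{n}}{\binom{2n}{n}}=\binom nr\frac{2^r(1/2)_r}{\fall{(2n)}{r}}.
\]
This follows from two elementary facts:
\[
 \binom{2r}{r}=\frac{4^r(1/2)_r}{r!},\qquad \frac{\binom{2n-r}{n}}{\binom{2n}{n}}=\frac{\fall{n}{r}}{\fall{(2n)}{r}}.
\]
Substituting these, the left side becomes $2^r(1/2)_r\,\fall{n}{r}/(r!\,\fall{(2n)}{r})$, which is the right side.

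I do not expect a genuine obstacle. The only points needing care are the convergence range $r\le n+1$ and the bookkeeping in the reindexing $k=n-r$, in particular that $\binom{n-r}{n-j}=\binom{n-r}{j-r}$ enforces $r\le j$ so the sums have matching ranges.
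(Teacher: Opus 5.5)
Your proposal is correct and follows the paper's proof essentially step for step. The paper substitutes $x=1+2t$ and expands $(1+2t)^j$ where you use $y=x-1$ with the scaled beta integral. For the coefficients it reindexes with $k=n-r$ and uses the same two elementary identities, merely grouping the binomials as $\binom{2n-r}{n-r}\binom{n-r}{n-j}/\binom{2n}{n}=\binom nj\fall{j}{r}/\fall{(2n)}{r}$ rather than through $\binom nj\binom jr=\binom nr\binom{n-r}{j-r}$.
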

\begin{proof}
Substituting $x=1+2t$ and expanding $(1+2t)^j$, we obtain
\begin{align*}
 m_{n,j}
 &=\frac1{\mathrm B(1/2,2n+1)}
   \int_0^\infty(1+2t)^j t^{-1/2}(1+t)^{-2n-3/2}\,dt\\*
 &=\sum_{r=0}^j\binom jr2^r
   \frac{\mathrm B(r+1/2,2n+1-r)}{\mathrm B(1/2,2n+1)}\\*
 &=\sum_{r=0}^j\binom jr
   \frac{2^r(1/2)_r}{\fall{(2n)}{r}}.
\end{align*}
These integrals converge because $j\le n+1<2n+1$.
In particular, $\int_1^\infty f_n(x)\,dx=m_{n,0}=1$.

In the defining sum for $d_{n-j}(n)$, put $k=n-r$ and use
$d_n(n)=2^{-n}\binom{2n}n$. The identities
\[
 \frac{\binom{2n-r}{n-r}\binom{n-r}{n-j}}{\binom{2n}n}
 =\binom nj\frac{\fall{j}{r}}{\fall{(2n)}{r}},\qquad
 2^{-r}\binom{2r}r=\frac{2^r(1/2)_r}{r!}
\]
give $c_{n,j}=\binom nj m_{n,j}$.
\end{proof}

Write $d\mu_n(x,y)=f_n(x)f_n(y)\,dx\,dy$ on $(1,\infty)^2$.

\begin{lemma}\label{lem:eta-integrals}
For $1\le r\le n$,
\begin{equation}\label{eq:positive-eta}
 \eta_{n,r}=\frac1{4(n+1)}\int_{(1,\infty)^2} (xy-1)^{r-1}
 \bigl(2(2n-r+2)(xy-1)+r^2(x-y)^2\bigr)\,d\mu_n(x,y)>0.
\end{equation}
\end{lemma}
\begin{proof}
Put $t=xy-1$, $e=(x-y)^2$, and $g(t)=t^{r-1}$.
For $0\le j\le n$, Lemma~\ref{lem:moments} and symmetry in $x,y$ give
\[
 \Lambda_{n,j}=\int_{(1,\infty)^2}\left((1+t)^j
 -\frac{j(n-j)}{2(n+1)}e(1+t)^{j-1}\right)\,d\mu_n.
\]
Taking the $r$th finite difference and using the first two derivatives
of the binomial expansion of $t^r$, we obtain
\[
 \eta_{n,r}=\int_{(1,\infty)^2}\left(tg(t)-\frac{re}{2(n+1)}
 \bigl((n-r)g(t)-g'(t)\bigr)\right)\,d\mu_n.
\]
The weight satisfies
\[
 ((x^2-1)f_n(x))'=((2n+1)-2nx)f_n(x).
\]
Apply integration by parts to $(x-y)g(t)$ in $x$ and $y$, and subtract
the resulting identities. Using $tg'(t)=(r-1)g(t)$, we obtain
\[
 \int_{(1,\infty)^2}\bigl(2tg(t)-(2n-r)eg(t)+2eg'(t)\bigr)\,d\mu_n=0.
\]
For each fixed $y>1$, the boundary terms are $O((x-1)^{1/2})$
at $1$ and $O(x^{-n})$ at infinity, and hence vanish; the same holds
with $x$ and $y$ interchanged. The polynomial factors in these integrations
have degree at most $r+1\le n+1$ in each variable, so Lemma~\ref{lem:moments} ensures
absolute integrability and justifies integration in either order.
Eliminating the integral containing $g'(t)$ proves~\eqref{eq:positive-eta}.
Its integrand is positive for $x,y>1$. The argument includes $r=1$,
when $g(t)=1$ and $g'(t)=0$.
\end{proof}

\begin{proof}[Proof of Proposition~\ref{prop:moment-ratio}]
We have $\eta_{n,0}=1$, and Lemma~\ref{lem:eta-integrals} gives
$\eta_{n,r}>0$ for $1\le r\le n$. For the ratio bound, let
$n\ge36$ and $4\le r\le n$, and put
\[
 \ell=2n-r,\qquad h=n-r,\qquad t=xy-1,\qquad e=(x-y)^2.
\]
Write $I_j=\int_{(1,\infty)^2} t^j\,d\mu_n$ and $E_j=\int_{(1,\infty)^2} et^j\,d\mu_n$.
The integration-by-parts identity in Lemma~\ref{lem:eta-integrals} gives
\[
 (2n-j)E_{j-1}=2I_j+2(j-1)E_{j-2}\qquad(2\le j\le r).
\]
Applying the same integration-by-parts formula in $x$ to $t^{r-1}$
and $yt^{r-1}$, respectively, and using symmetry in $x,y$, we obtain
\begin{align*}
 \int_{(1,\infty)^2} yt^{r-1}\,d\mu_n&=\frac{2n+1}{\ell+1}I_{r-1},\\*
 (\ell+1)(I_r+I_{r-1})
 &=(2n+1)\int_{(1,\infty)^2} yt^{r-1}\,d\mu_n-\frac{r-1}{2}E_{r-2}.
\end{align*}
The boundary and integrability estimates are the same as in that lemma.
Eliminating the mixed moment yields
\[
 (\ell+1)^2I_r=r(2\ell+r+2)I_{r-1}
              -\frac{(r-1)(\ell+1)}2E_{r-2}.
\]
The integration-by-parts identity at indices $r-1$ and $r$ also gives
\[
 E_{r-2}=\frac{2I_{r-1}+2(r-2)E_{r-3}}{\ell+1},\qquad
 E_{r-1}=\frac{2I_r+2(r-1)E_{r-2}}{\ell}.
\]
We first substitute the formula for $E_{r-2}$ into that for $I_r$,
and then use the formula for $E_{r-1}$. Substituting the resulting
expressions into~\eqref{eq:positive-eta} at indices $r$ and $r-1$ gives
\[
 r\eta_{n,r-1}-(n-r+2)\eta_{n,r}
 =\frac{2P_r(h)I_{r-1}+(r-1)(r-2)Q_r(h)E_{r-3}}
 {2(n+1)\ell(\ell+1)^2},
\]
where
\begin{align*}
 P_r(h)&=2(r^2-3r-1)h^3+2(r^3-6r^2-4r-3)h^2\\*
       &\quad +(2r^4-16r^3-7r^2-5r-4)h\\*
       &\quad +r(r+1)(r^3-7r^2+2r-2),\\
 Q_r(h)&=4h^3+12h^2+2(r-2)(r^2-3r-2)h\\*
       &\quad +r(r-4)(r-1)(r+1).
\end{align*}
Clearly $Q_r(h)\ge0$ for $r\ge4$ and $h\ge0$.
For $r\ge9$, put $s=r-9\ge0$. Then
\begin{align*}
 P_r(h)&=(2s^2+30s+106)h^3\\*
       &\quad +2(s^3+21s^2+131s+204)h^2\\*
       &\quad +(2s^4+56s^3+533s^2+1813s+842)h\\*
       &\quad +(s+9)(s+10)(s^3+20s^2+119s+178)>0.
\end{align*}
For $4\le r\le8$, we have $h=n-r\ge28$. Substitution into the
formula defining $P_r(h)$ gives
\[
 P_r(h)\ge6h^3-102h^2-1150h-1260
 \ge h(66h-1150)-1260>0.
\]
Since $I_{r-1}>0$ and $E_{r-3}\ge0$, the desired ratio bound follows.
\end{proof}

\section{Bounds for the partial sums}\label{sec:partial-sum-bounds}

\subsection{Identities for the partial sums}\label{sec:identities-at-zeros}

Define
\begin{equation}\label{eq:normalized-remainder}
 R_{n,k}(w)=1-\frac{(n+3)(1-w)}{n+1}V_{n,k}(w)\qquad(0\le k\le n).
\end{equation}
For $w\ne1$,
\[
 V_{n,k}(w)=\frac{n+1}{(n+3)(1-w)}\bigl(1-R_{n,k}(w)\bigr).
\]
For $0<u<1$, this gives
\begin{equation}\label{eq:prefix-remainder}
 (1-u)\left(V_{n,k}(u)-\frac{n+1}{2(n+3)}\right)
 =\frac{n+1}{n+3}\left(\frac{1+u}{2}-R_{n,k}(u)\right).
\end{equation}
For $n\ge36$, Theorem~\ref{thm:all-zeros} is therefore equivalent to
\[
 R_{n,k}(u)<\frac{1+u}{2}
\]
at every zero $u$ of $J_n(w)$ and for $1\le k\le n$.

\begin{lemma}\label{lem:boundary-recurrence}
For $n\ge1$ and $0\le k\le n$,
\begin{equation}\label{eq:R-boundary}
 R_{n,k}(w)=\frac{(n-k+1)A_{n,k}(w)-w(n-k)A_{n,k-1}(w)}{n+1}.
\end{equation}
\end{lemma}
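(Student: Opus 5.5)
Since $R_{n,k}$ is defined through $V_{n,k}$, the plan is to prove \eqref{eq:R-boundary} by induction on $k$, using the three-term recurrence \eqref{eq:A-recurrence}. Write $S_k(w)$ for the right-hand side of \eqref{eq:R-boundary}. Then
\[
 R_{n,k+1}(w)-R_{n,k}(w)=-\frac{(n+3)(1-w)}{n+1}A_{n,k}(w),
\]
so it suffices to check the base case $k=0$ and to show that $S_{k+1}-S_k$ equals the same increment for $0\le k<n$.

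For the base case, $V_{n,0}=0$ gives $R_{n,0}=1$. On the other side, $A_{n,0}=1$ and $A_{n,-1}=0$ give $S_0=(n+1)/(n+1)=1$, so the two agree.

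For the inductive step, I will compute
\[
 (n+1)S_{k+1}=(n-k)A_{n,k+1}-w(n-k-1)A_{n,k}
\]
and substitute \eqref{eq:A-recurrence} for $A_{n,k+1}$. The factor $n-k$ clears the denominator in the coefficient $(k+2)/(n-k)$, which gives
\[
 (n-k)A_{n,k+1}=\bigl(2w(n-k)-(k+2)(1-w)\bigr)A_{n,k}-w(n-k)A_{n,k-1}.
\]
Subtracting $(n+1)S_k=(n-k+1)A_{n,k}-w(n-k)A_{n,k-1}$ cancels the $A_{n,k-1}$ terms exactly. The coefficient of $A_{n,k}$ that remains is
\[
 2w(n-k)-(k+2)(1-w)-w(n-k-1)-(n-k+1).
\]
I expect this to simplify to $-(n+3)(1-w)$. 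Checking: the $w$-terms give $w\bigl(2(n-k)+(k+2)-(n-k-1)\bigr)=w(n+3)$, and the constant terms give $-(k+2)-(n-k+1)=-(n+3)$. Hence
\[
 (n+1)(S_{k+1}-S_k)=-(n+3)(1-w)A_{n,k},
\]
which is the required increment, and the induction closes.

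The endpoint $k=n$ needs no separate treatment, since the recurrence is used only for $k<n$ and the identity for $k=n$ is reached as the last inductive step. There is no real obstacle in this argument; the only point needing care is keeping the factor $(n-k)$ in front of $A_{n,k+1}$ so that no division occurs. That makes the identity one between polynomials, valid for all $w$ including $w=1$.
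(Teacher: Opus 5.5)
Your proof is correct and follows the paper's argument: you check $k=0$, then use \eqref{eq:A-recurrence} to show that consecutive right-hand sides differ by $-\frac{n+3}{n+1}(1-w)A_{n,k}(w)$, which matches the increment of $R_{n,k}(w)$ coming from \eqref{eq:normalized-remainder}. The only difference is that you carry out the coefficient simplification explicitly, whereas the paper leaves it to the reader.
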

\begin{proof}
The right-hand side of~\eqref{eq:R-boundary} equals $1$ when $k=0$.
By~\eqref{eq:A-recurrence}, the difference between the right-hand sides
of~\eqref{eq:R-boundary} at indices $k+1$ and $k$ is
\[
 -\frac{n+3}{n+1}(1-w)A_{n,k}(w)\qquad(0\le k<n).
\]
Summing these differences and using~\eqref{eq:normalized-remainder}
proves the identity.
\end{proof}

For $n\ge1$, we take $k=0,1,n$ in~\eqref{eq:R-boundary} and use
\eqref{eq:A-recurrence} and~\eqref{eq:narayana-jacobi} to obtain
\begin{equation}\label{eq:R-initial}
 R_{n,0}(w)=1,\qquad R_{n,1}(w)=\frac{(n+3)w-2}{n+1},\qquad R_{n,n}(w)=J_n(w).
\end{equation}

For $w\ne1$, subtract adjacent instances of
\eqref{eq:normalized-remainder}, solve for $A_{n,k}(w)$ and
$A_{n,k-1}(w)$, and substitute into~\eqref{eq:R-boundary}. This gives
\begin{equation}\label{eq:R-recurrence}
 R_{n,k+1}(w)=a_k(w)R_{n,k}(w)-\beta_k(w)R_{n,k-1}(w)
 \qquad(1\le k<n),
\end{equation}
where
\[
 a_k(w)=\frac{(2n-k+3)w-k-2}{n-k+1},\qquad
 \beta_k(w)=\frac{(n-k)w}{n-k+1}.
\]
Both sides of~\eqref{eq:R-recurrence} are polynomials in $w$, so the
recurrence also holds at $w=1$.

At a zero $u$ of $J_n(w)$, we have $R_{n,n}(u)=0$
by~\eqref{eq:R-initial}. Substituting into~\eqref{eq:normalized-remainder},
we obtain
\[
 V_{n,n}(u)=\frac{n+1}{(n+3)(1-u)}.
\]
For $0\le k\le n$, we have
\[
 R_{n,k}(u)=\frac{V_{n,n}(u)-V_{n,k}(u)}{V_{n,n}(u)}
       =\frac{\sum_{j=k}^{n-1}A_{n,j}(u)}
              {\sum_{j=0}^{n-1}A_{n,j}(u)}.
\]

\subsection{Bounds from the recurrence}

We distinguish the cases $k\le(n+3)u-2$ and $k>(n+3)u-2$.
The first is equivalent to $(n+3)(1-u)\le n-k+1$, the condition
needed for the quadratic estimate below. In the second case,
$u<(k+2)/(n+3)$; we use this inequality to estimate $u^k$ in
Proposition~\ref{prop:gaussian-bound}.

\begin{proposition}\label{prop:recurrence-bound}
Let $n\ge1$, $0<u<1$, and $1\le k\le n$. If $k\le(n+3)u-2$, then
\[
 |R_{n,k}(u)|<\sqrt u<\frac{1+u}{2}.
\]
\end{proposition}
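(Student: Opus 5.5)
The plan is a Lyapunov-type (energy) argument for the three-term recurrence \eqref{eq:R-recurrence}, run along the initial segment of indices where the hypothesis holds. Fix $u\in(0,1)$. The condition $k\le (n+3)u-2$ is monotone in $k$: if it holds for $k$, it holds for every smaller index. So the admissible indices form an interval $1\le k\le K$, and I will argue by induction on $k$ within it.

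First I rewrite the coefficients. Put $m=n-k+1$, so that $2n-k+3=2m+k+1$. Then
\[
 a_k(u)=2u-\frac{(k+2)(1-u)+u}{m},\qquad \beta_k(u)=u\Bigl(1-\frac1m\Bigr).
\]
The hypothesis is $(n+3)(1-u)\le m$, which is equivalent to $(k+2)(1-u)\le mu$. This gives the two-sided bound
\[
 u\Bigl(1-\frac1m\Bigr)\le a_k(u)\le u\Bigl(2-\frac1m\Bigr).
\]
I expect this to put the recurrence in its oscillatory regime, $a_k^2<4\beta_k$, at least in most of the range. The case $u$ close to $1$ with $m$ small needs separate care; there I would use $m\ge(n+3)(1-u)$ again.

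For the invariant I use the standard quadratic form of a recurrence $y_{k+1}=ay_k-\beta y_{k-1}$:
\[
 F_k=R_{n,k}^2-a_kR_{n,k}R_{n,k-1}+\beta_kR_{n,k-1}^2.
\]
For constant coefficients one has $F_{k+1}=\beta F_k$. With the coefficients frozen at index $k$, the form $x^2-a_kxy+\beta_ky^2$ is positive definite when $a_k^2<4\beta_k$. On the ellipse $\{F=c\}$ its maximum of $x^2$ is $c/(1-a_k^2/(4\beta_k))$.

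The steps, in order, are as follows.
\begin{enumerate}
\item Compute the base case exactly from \eqref{eq:R-initial}: $R_{n,0}=1$ and $R_{n,1}(u)=((n+3)u-2)/(n+1)$. Check that $|R_{n,1}(u)|<\sqrt u$ whenever $1\le(n+3)u-2$, which is an elementary inequality in $u$.
\item Show that passing from index $k$ to $k+1$ (i.e.\ $m\mapsto m-1$) changes the frozen form in a controlled direction. Concretely, I aim to prove that $F_{k+1}$, evaluated with the index-$(k+1)$ coefficients, is at most $\beta_kF_k$ plus a correction, and that the correction has the right sign because $\beta_k$ and $a_k$ move monotonically in $k$ on the admissible range.
\item Combine the decay $F_k\lesssim u^{k}F_0$ with the ellipse bound to obtain $R_{n,k}^2<u$.
\end{enumerate}

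I expect step 2 to be the main obstacle. The coefficients genuinely vary with $k$, and $a_k^2<4\beta_k$ is not automatic near the boundary of the admissible range. If the plain form $F_k$ does not work, the first thing I would try is a rescaled form $\lambda_k R_{n,k}^2-a_kR_{n,k}R_{n,k-1}+\beta_kR_{n,k-1}^2$, with $\lambda_k$ chosen (for example $\lambda_k=(m-1)/m$ or a similar rational factor) so that the cross-term discrepancy between consecutive steps cancels exactly. That would leave a monotone sequence bounded by its value at $k=1$.

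The final inequality $\sqrt u<(1+u)/2$ is AM--GM, strict because $u\ne1$.
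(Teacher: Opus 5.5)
Step~1 and the final AM--GM step are fine. Step~2, however, carries the entire proof, and the mechanism you propose for it fails. Write $R_k=R_{n,k}(u)$, $q=1-u$, $\lambda=(n+3)q$ and $m=n-k+1$. Then $a_k=1+u-(u+\lambda)/m$ and $\beta_k=u(1-1/m)$, and a direct computation gives the exact identity
\[
 F_{k+1}=\beta_kF_k+\frac{(u+\lambda)R_kR_{k+1}-uR_k^2}{m(m-1)} .
\]
Monotonicity of $a_k$ and $\beta_k$ does not control the sign of the correction. It is positive as soon as $R_kR_{k+1}>\frac{u}{u+\lambda}R_k^2$, which is typical while the sequence varies slowly. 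For $n=100$, $\lambda=3$, $k=1$ one has $R_0,R_1,R_2\approx 1,\,0.970,\,0.913$, and the correction is about $+0.009F_1$. Nor can it be absorbed by crude estimates: for bounded $\lambda$ and small $k$, the target $R_k^2<u$ holds only with an $O(1/n)$ margin. The regime you postpone is also not a side issue. For $\lambda<1$ your frozen form is indefinite at small $m$; for example, $n=10$, $\lambda=0.1$, $k=9$ gives $a_k^2>4\beta_k$. The case $k=n$, which is admissible exactly when $\lambda\le 1$, has $\beta_n=0$, so no ellipse bound is available there at all.

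What is missing is a functional that is provably monotone, together with the proof of monotonicity. Note that $F_k=R_k^2-R_{k-1}R_{k+1}$. The functional the paper uses for $\lambda>1$ is exactly $\mathcal E_{k+1}=\frac{(n+1)u}{\lambda}F_k$, but read as a quadratic form in the later pair $(R_{k+1},\Delta_{k+1})$, where $\Delta_j=R_j-R_{j-1}$. Equivalently, up to the factor $\frac{(n+1)u}{\lambda\beta_k}$, it is your frozen form with coefficients $a_k,\beta_k$ evaluated at $(R_{k+1},R_k)$ rather than $(R_k,R_{k-1})$. So the useful modification is an index shift, not a rescaling of the $R_k^2$ coefficient. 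In these variables there is the exact identity $\mathcal E_{j+1}=u\mathcal E_j-\frac{(n+1)u}{\lambda m_j}\Delta_j(qR_j+u\Delta_j)$, and the last term is at most $\frac{(n+1)q}{4m_j(n+3)}R_j^2$. This term is absorbed by the coercivity estimate $\mathcal E_j\ge\bigl(1+\frac{n+9}{4n(n+3)}\bigr)R_j^2$. That estimate comes from completing the square and the identity $N\bigl(m(N-m)qu-(1-(N-m)q)^2\bigr)=m(N-m-1)+(N-m)(\lambda-1)(m+1-\lambda)$ with $N=n+3$, and this is where $\lambda>1$ enters. Since $\mathcal E_1=u$, it yields $R_k^2\le\mathcal E_k/(1+\frac{n+9}{4nN})<\mathcal E_1=u$. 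For $\lambda\le1$ the paper uses a different, diagonal functional $E_j=R_j^2+\frac{m_j+1-\lambda}{\lambda}\Delta_j^2$, which satisfies $E_{j+1}\le uE_j$ and $E_1=1-\lambda/(n+1)<u$. Neither ingredient appears in your proposal, so as it stands it is a plan rather than a proof.
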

\begin{proof}
Put $N=n+3$, $q=1-u$, and $\lambda=Nq$. For $0\le j\le k$, write
$R_j=R_{n,j}(u)$, and for $1\le j\le k$, put
$m_j=n-j+1$ and $\Delta_j=R_j-R_{j-1}$.
The hypothesis gives $\lambda\le m_k\le m_j\le n$ for $1\le j\le k$,
and the recurrence yields
\[
 \Delta_{j+1}=\frac{(m_j-1)u}{m_j}\Delta_j
              -\frac{\lambda}{m_j}R_j\qquad(1\le j<k).
\]
Also, $R_0=1$ and $R_1=1-\lambda/(n+1)$.

First suppose $0<\lambda\le1$, and set
\[
 E_j=R_j^2+\frac{m_j+1-\lambda}{\lambda}\Delta_j^2\ge R_j^2.
\]
Substituting the recurrence gives
\[
 E_{j+1}=\left(1-\frac{\lambda}{m_j}\right)R_j^2
 +\frac{(m_j-1)^2u^2}{\lambda m_j}\Delta_j^2\le uE_j,
\]
since $m_j<N$ and
$(m_j-1)^2u/m_j\le m_j-1<m_j+1-\lambda$.
As $E_1=1-\lambda/(n+1)<u$, we obtain $R_k^2\le E_k\le E_1<u$.

Now suppose $\lambda>1$. Put $d_j=1-(N-m_j)q$ and
\[
 \mathcal E_j=\frac{n+1}{m_j}
 \left(R_j^2+\frac{d_j}{\lambda}R_j\Delta_j
                  +\frac{m_ju}{\lambda}\Delta_j^2\right).
\]
For $\lambda\le m\le n$ and $a=N-m\ge3$, we have
\[
 N\bigl(maqu-(1-aq)^2\bigr)
 =m(a-1)+a(\lambda-1)(m+1-\lambda)>0.
\]
Taking $m=m_j$ and using $\lambda=Nq$, we obtain
$d_j^2<m_j(N-m_j)u\lambda/N$, and hence
\[
 1-\frac{d_j^2}{4m_ju\lambda}>\frac{3N+m_j}{4N}.
\]
Completing the square now gives
\begin{align*}
 \mathcal E_j
 &=\frac{n+1}{m_j}\left[
   \frac{m_ju}{\lambda}
   \left(\Delta_j+\frac{d_j}{2m_ju}R_j\right)^2
   +\left(1-\frac{d_j^2}{4m_ju\lambda}\right)R_j^2
   \right]\\*
 &\ge\frac{(n+1)(3N+m_j)}{4m_jN}R_j^2
 \ge\left(1+\frac{n+9}{4nN}\right)R_j^2,
\end{align*}
where the last inequality uses $m_j\le n$.
For $1\le j<k$, substitution in the recurrence gives
\[
 \mathcal E_{j+1}=u\mathcal E_j
 -\frac{(n+1)u}{\lambda m_j}\Delta_j(qR_j+u\Delta_j).
\]
Since $\Delta_j(qR_j+u\Delta_j)\ge-q^2R_j^2/(4u)$ and
$\lambda=Nq$, it follows that
\[
 \mathcal E_{j+1}
 \le\left(u+\frac{(n+1)q}{4m_jN}\right)\mathcal E_j
 \le\mathcal E_j.
\]
Here the factor is less than $1$ because $m_j\ge1$ and $N>n+1$.
Since $\mathcal E_1=u$, the preceding bounds give
\[
 R_k^2
 \le\frac{\mathcal E_k}{1+\frac{n+9}{4nN}}
 \le\frac{\mathcal E_1}{1+\frac{n+9}{4nN}}
 =\frac{u}{1+\frac{n+9}{4nN}}<u.
\]
Finally, $0<u<1$ implies $2\sqrt u<1+u$.
\end{proof}

\subsection{Jacobi polynomials and Gaussian quadrature}

\begin{proposition}\label{prop:gaussian-bound}
Let $n\ge2$ and $1\le k<n$. At every zero $u$ of $J_n(w)$,
\begin{equation}\label{eq:gaussian-bound}
 |R_{n,k}(u)|^2
 \le\frac{(n-k)(n-k+1)(n+3)}{(n+1)(2n-k+3)}\,u^k.
\end{equation}
\end{proposition}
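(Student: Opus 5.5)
The plan is to combine an explicit Jacobi-type description of $R_{n,k}(w)$ with Gauss--Jacobi quadrature at the zeros of $J_n$. Lemma~\ref{lem:boundary-recurrence} and \eqref{eq:A-recurrence} give $\deg R_{n,k}\le k$. The recurrence \eqref{eq:R-recurrence} and the initial values \eqref{eq:R-initial} determine $R_{n,k}$ completely. The endpoint value $R_{n,n}=J_n$ suggests that $R_{n,k}(w)$ is a hypergeometric polynomial built from $N_n$ and its derivatives.

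\textbf{Step 1 (closed form).} I would first find a closed form for $R_{n,k}$. The quotients in \eqref{eq:derivative-values} are derivatives of ${}_2F_1(-n,-n-1;2;x)$, and these are again ${}_2F_1$'s with shifted parameters. I would write $R_{n,k}(w)$ as $w^k$ times a ${}_2F_1$ in $1-1/w$. Equivalently, I would write it as a combination of Jacobi polynomials $P^{(\alpha,\beta)}_{k}$ with $\alpha$ depending on $n-k$. I would then verify the result against \eqref{eq:R-recurrence}, which only requires checking a contiguous relation.

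\textbf{Step 2 (quadrature).} Let $d\sigma(w)=w(1-w)\,dw$ on $(0,1)$. Its monic orthogonal polynomial of degree $n$ is a multiple of $J_n$. Gauss quadrature with positive Christoffel numbers $\lambda_i$ at the nodes $u_i$ is exact for polynomials of degree $\le 2n-1$. Since $k<n$, the polynomial $R_{n,k}^2$ has degree $\le 2k\le 2n-2$. Dropping all but one term of the quadrature sum gives
\[
 \lambda_i R_{n,k}(u_i)^2\le\int_0^1R_{n,k}(w)^2\,d\sigma(w).
\]
This alone does not produce the factor $u^k$. To get it, I would apply the same argument to a better adapted quantity. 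One option is to write $R_{n,k}(w)=w^{k}S_{k}(1/w)$ and use the reproducing-kernel (Christoffel--Darboux) form of the quadrature, $R_{n,k}(u_i)=\lambda_i\int_0^1R_{n,k}(w)K_{n-1}(u_i,w)\,d\sigma(w)$. I would then apply Cauchy--Schwarz with the weight split as $w^k\cdot w^{-k}$, so that the factor $u_i^k$ comes out of the kernel side. Another option is to use the representation $R_{n,k}(u)=\sum_{j\ge k}A_{n,j}(u)/\sum_{j}A_{n,j}(u)$ from the end of Section~\ref{sec:identities-at-zeros}. In that form, the numerator is a tail whose leading behaviour is of order $u^k$.

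\textbf{Step 3 (explicit constants).} I would evaluate $\int R_{n,k}^2\,d\sigma$ in closed form using the Jacobi norm formulas from \cite{DLMF,Szego}, and bound $\lambda_i$ from below. The known Christoffel number formula $\lambda_i\propto 1/\bigl(u_i(1-u_i)J_n'(u_i)^2\bigr)$ combines with \eqref{eq:derivative-values} for $k=0$. Together these should produce exactly the constant $(n-k)(n-k+1)(n+3)/((n+1)(2n-k+3))$; its shape, with a denominator $2n-k+3$, looks like a ratio of Jacobi norms with parameter $n-k$.

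The main obstacle is Step~2: producing the factor $u^k$ rather than a bound uniform in $u$. I would first try to exhibit $R_{n,k}(w)$, after the substitution, as orthogonal with respect to a weight of the form $w^{k}(1-w)^{\cdot}$. The quadrature inequality applied with that weight would then give $u^k$ directly from the ratio of the two weights at the node.
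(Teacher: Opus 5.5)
\paragraph{There is a genuine gap in Step~2.} It is more serious than the missing factor $u^k$. Quadrature applied to $R_{n,k}(w)^2$ gives only
\[
 R_{n,k}(u_i)^2\le (n+1)(n+2)\,u_i(1-u_i)\,J_n'(u_i)^2\int_0^1R_{n,k}(w)^2\,w(1-w)\,dw .
\]
Nothing in your plan controls $J_n'(u_i)^2$. The Christoffel numbers are not explicit, and they are very small at the extreme nodes. Moreover, \eqref{eq:derivative-values} at $k=0$ is just the normalization $A_{n,0}(u)=1$, which says nothing about $J_n'(u_i)$. So Step~3 cannot produce the stated constant, with or without $u^k$.

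Your proposed repairs do not close this gap. Gauss quadrature for a weight $w^k(1-w)^{\gamma}$ uses different nodes, not the zeros of $J_n$. Splitting off $w^{\pm k}$ in Cauchy--Schwarz makes one of the two integrals diverge at $w=0$ once $k\ge2$, since neither $R_{n,k}(w)$ nor $K_{n-1}(u_i,w)$ vanishes at $w=0$. Step~1 also aims at the wrong object: a closed form for the degree-$\le k$ polynomial $R_{n,k}(w)$ is never needed.

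\paragraph{The missing idea.} You need a representation that holds only at the zeros of $J_n$. It should involve a polynomial of complementary degree $n-k-1$ together with a factor $1/J_n'(u)$.
\begin{itemize}
 \item At a zero $u$, the sequence $k\mapsto R_{n,k}(u)$ satisfies \eqref{eq:R-recurrence} with terminal value $R_{n,n}(u)=0$.
 \item Put $Q_{n,k}(w)=P^{(2,k+2)}_{n-k-1}(2w-1)$, $Y_{n,k}(w)=w^kQ_{n,k}(w)$ and $Y_{n,n}=0$. By a Jacobi contiguous relation, $Y_{n,k}$ satisfies the same recurrence.
 \item Since $\beta_k(u)\ne0$, the two sequences are proportional. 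The values $R_{n,0}=1$ and $Y_{n,0}=\frac{n+1}{n+3}J_n'$ fix the constant, giving
\[
 R_{n,k}(u)=\frac{n+3}{n+1}\,\frac{u^kQ_{n,k}(u)}{J_n'(u)}.
\]
\end{itemize}
Your tail remark points in this direction. By \eqref{eq:derivative-values}, $\sum_{j\ge k}A_{n,j}(u)$ equals $u^k$ times a polynomial of degree at most $n-k-1$, divided by $(1-u)\widehat N_n'(u)$. You would still need to identify that polynomial as $\pm Q_{n,k}$ in order to compute its norm.

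\paragraph{How the identity finishes the proof.} The factor $J_n'(u_i)^2$ cancels exactly against the Christoffel number $\omega_i=1/\bigl((n+1)(n+2)u_i(1-u_i)J_n'(u_i)^2\bigr)$. This gives
\[
 \frac{R_{n,k}(u_i)^2}{u_i^k}=\frac{(n+2)(n+3)^2}{n+1}\,\omega_iu_i^{k+1}(1-u_i)Q_{n,k}(u_i)^2 .
\]
For $k\ge1$, the polynomial $w^{k+1}(1-w)Q_{n,k}(w)^2$ has degree $2n-k\le2n-1$, so quadrature is exact for it. Combining this with the norm of $P^{(2,k+2)}_{n-k-1}$, which is the source of the factor $2n-k+3$, gives the exact identity
\[
 \sum_iR_{n,k}(u_i)^2/u_i^k=\frac{(n-k)(n-k+1)(n+3)}{(n+1)(2n-k+3)} .
\]
Dropping all but one nonnegative term yields \eqref{eq:gaussian-bound}.
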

\begin{proof}
For $0\le k<n$, put
\[
 Q_{n,k}(w)=P_{n-k-1}^{(2,k+2)}(2w-1),\qquad
 Y_{n,k}(w)=w^kQ_{n,k}(w),
\]
and set $Y_{n,n}(w)=0$. We first show that
\begin{equation}\label{eq:jacobi-remainder}
 R_{n,k}(u)=\frac{n+3}{n+1}
              \frac{u^kQ_{n,k}(u)}{J_n'(u)}\qquad(0\le k<n).
\end{equation}
Eliminating $P_{m-1}^{(2,\beta)}(x)$ from the
contiguous relations \cite[(18.9.5), (18.9.6)]{DLMF}, and then applying
(18.9.6) once more, gives
\[
 (m+1)P_{m+1}^{(2,\beta-1)}(x)
 =\left((2m+\beta+3)\frac{1+x}{2}-\beta\right)P_m^{(2,\beta)}(x)
 -(m+2)\frac{1+x}{2}P_{m-1}^{(2,\beta+1)}(x).
\]
Here $m\ge0$, $\beta>0$, and $P_{-1}(x)=0$; the case $m=0$
also follows directly from the formulas for $P_0(x)$ and $P_1(x)$.
Taking $m=n-k-1$, $\beta=k+2$, $x=2w-1$, and multiplying by $w^k$ gives
\[
 (n-k+1)Y_{n,k+1}(w)
 =\bigl((2n-k+3)w-k-2\bigr)Y_{n,k}(w)
       -(n-k)wY_{n,k-1}(w)
 \qquad(1\le k<n).
\]
At a zero $u$ of $J_n(w)$, the sequences $Y_{n,k}(u)$ and $R_{n,k}(u)$
therefore satisfy the same recurrence, with
$Y_{n,n}(u)=R_{n,n}(u)=0$. Since $\beta_j(u)>0$ for $1\le j<n$,
we can determine each sequence backward from its value at $n-1$.
As $Y_{n,n-1}(u)=u^{n-1}\ne0$, the two sequences are proportional.
By the differentiation formula for Jacobi polynomials,
\[
 Y_{n,0}(w)=P_{n-1}^{(2,2)}(2w-1)=\frac{n+1}{n+3}J_n'(w).
\]
Together with $R_{n,0}(u)=1$ and $J_n'(u)\ne0$, this determines
the proportionality constant and proves~\eqref{eq:jacobi-remainder}.

Apply Gaussian quadrature for $w(1-w)\,dw$ on $[0,1]$ at the
zeros $u_1,\ldots,u_n$ of $J_n(w)$. The formula is exact through degree
$2n-1$ and has positive weights
\begin{equation}\label{eq:gaussian-weight}
 \omega_i=\frac{1}{(n+1)(n+2)u_i(1-u_i)J_n'(u_i)^2}.
\end{equation}
Indeed, using the norm formula for Jacobi polynomials, we obtain
\[
 \int_0^1w(1-w)J_j(w)^2\,dw
 =\frac{1}{(2j+3)(j+1)(j+2)}.
\]
The ratio of leading coefficients of $J_n(w)$ and $J_{n-1}(w)$ is
$2(2n+1)/(n+2)$. At a zero $u_i$, the recurrence and differentiation
formulas for Jacobi polynomials give
$J_{n-1}(u_i)=2u_i(1-u_i)J_n'(u_i)/n$.
Substituting these identities into the Christoffel--Darboux formula,
we obtain~\eqref{eq:gaussian-weight}.

Fix $1\le k<n$. The polynomial $w^{k+1}(1-w)Q_{n,k}(w)^2$
has degree $2n-k\le2n-1$, so exactness of Gaussian quadrature gives
\begin{equation}\label{eq:gaussian-quadrature}
 \sum_{i=1}^n\omega_i u_i^{k+1}(1-u_i)Q_{n,k}(u_i)^2
 =\int_0^1 w^{k+2}(1-w)^2Q_{n,k}(w)^2\,dw.
\end{equation}
By~\eqref{eq:jacobi-remainder} and~\eqref{eq:gaussian-weight},
\[
 \omega_i u_i^{k+1}(1-u_i)Q_{n,k}(u_i)^2
 =\frac{n+1}{(n+2)(n+3)^2}\frac{R_{n,k}(u_i)^2}{u_i^k}.
\]
The Jacobi norm formula \cite[Table 18.3.1]{DLMF} yields
\[
 \int_0^1 w^{k+2}(1-w)^2Q_{n,k}(w)^2\,dw
 =\frac{(n-k)(n-k+1)}{(2n-k+3)(n+2)(n+3)}.
\]
Substituting these formulas into~\eqref{eq:gaussian-quadrature}, we obtain
\begin{equation}\label{eq:remainder-square-sum}
 \sum_{i=1}^n\frac{R_{n,k}(u_i)^2}{u_i^k}
 =\frac{(n-k)(n-k+1)(n+3)}{(n+1)(2n-k+3)}
 \qquad(1\le k<n).
\end{equation}
Every summand is nonnegative. Retaining the summand at any prescribed
zero $u=u_i$ and multiplying by $u^k>0$ proves~\eqref{eq:gaussian-bound}.
\end{proof}

\subsection{Partial sums at the zeros of \texorpdfstring{$J_n(w)$}{Jn(w)}}

\begin{proof}[Proof of Theorem~\ref{thm:all-zeros}]
Let $n\ge36$ and let $u$ be a zero of $J_n(w)$.
The case $k=1$ follows from $V_{n,1}(u)=1$.
For $k=n$, use $R_{n,n}(u)=0$ in~\eqref{eq:prefix-remainder}.
For $k=2$, the recurrence for $A_{n,k}(u)$ gives
\[
 V_{n,2}(u)=1-\frac2n+\frac{2(n+1)}n u>1-\frac2n,
 \qquad
 1-\frac2n-\frac{n+1}{2(n+3)}
 =\frac{(n+4)(n-3)}{2n(n+3)}>0.
\]
We may therefore assume $3\le k<n$. By~\eqref{eq:prefix-remainder},
it suffices to prove $R_{n,k}(u)<(1+u)/2$.

If $k\le(n+3)u-2$, Proposition~\ref{prop:recurrence-bound} gives
$|R_{n,k}(u)|<\sqrt u<(1+u)/2$.

If $k>(n+3)u-2$, put $a=k+2\ge5$ and $b=n-k+1\ge2$.
Then $u<a/(a+b)$. Proposition~\ref{prop:gaussian-bound} and
$(b-1)(a+b)<b(a+2b-1)$ give
\begin{align*}
 |R_{n,k}(u)|^2
 &<\frac{b^2}{n+1}\left(1+\frac ba\right)^{-(a-2)}\\*
 &<\frac{2a^2}{(n+1)(a-2)(a-3)}
 \le\frac{25}{3(n+1)}<\frac14.
\end{align*}
Here we retained the quadratic term in the binomial expansion and used
$a-2\ge3a/5$ and $a-3\ge2a/5$.
Thus $|R_{n,k}(u)|<1/2<(1+u)/2$.
\end{proof}

\section{Proofs of the main results}\label{sec:main-proofs}

\begin{proof}[Proof of Theorem~\ref{thm:main}]
Let $n\ge36$. By Proposition~\ref{prop:reduction}, the simple zeros
$v_1,\ldots,v_n$ of $\widehat M_n(w)$ and $u_1,\ldots,u_n$ of $J_n(w)$ satisfy
\[
 0<v_1<u_1<v_2<u_2<\cdots<v_n<u_n<1.
\]
The coefficient formula for $N_n(x)$ gives $x^nN_n(1/x)=N_n(x)$.
For $0<w<1$, set $g(w)=-(1-w)/w$. The definitions then yield
\[
 \widehat M_n(w)=\frac{(-1)^nw^n}{d_n(n)^2}M_n(g(w)),\qquad
 \widehat N_n(w)=(-1)^nw^nN_n(g(w)).
\]
Since $g'(w)=1/w^2>0$ and $g$ maps $(0,1)$ onto $(-\infty,0)$,
these identities transfer the zeros to $M_n$ and $N_n$ without
changing their order or multiplicities. Thus $\mu_{n,i}=g(v_i)$ and
$\nu_{n,i}=g(u_i)$ have the required strict interlacing.

It remains to consider $1\le n\le35$. Put
\[
 F_n(x)=16^n x^nM_n(1/x)=16^nd_n(n)^2\widetilde M_n(x).
\]
This polynomial has integer coefficients by~\eqref{eq:bm-coefficients}. Put
\[
 H_n(x)=\operatorname{lc}(F_n(x))N_n(x)-F_n(x).
\]
Since $N_n(x)$ is monic, $\deg H_n(x)<n$. Starting with $N_n(x),H_n(x)$,
form successive negatives of Euclidean remainders, clearing denominators
by positive factors and dividing by positive contents. Exact rational arithmetic
gives degrees $n,n-1,\ldots,0$ and positive leading coefficients
throughout this sequence for every $1\le n\le35$.
Let $P,Q,R$ be three consecutive polynomials in this sequence,
of degrees $d+1,d,d-1$, respectively. Positive normalization gives
$P=qQ-\gamma R$ with $\gamma>0$.
Suppose that $Q$ has simple real zeros $\xi_1<\cdots<\xi_d$
and that the zeros of $R$ strictly interlace them. Since the leading
coefficient of $R$ is positive,
\[
 \operatorname{sgn}P(\xi_i)
 =-\operatorname{sgn}R(\xi_i)=(-1)^{d-i+1}
 \qquad(1\le i\le d).
\]
Hence $P$ has a zero in each $(\xi_i,\xi_{i+1})$.
For sufficiently negative $x$, the sign of $P(x)$ is $(-1)^{d+1}$,
opposite to its sign $(-1)^d$ at $\xi_1$.
For sufficiently positive $x$, we have $P(x)>0$, whereas $P(\xi_d)<0$.
Thus there is also a zero in each of $(-\infty,\xi_1)$ and
$(\xi_d,\infty)$. These $d+1$ distinct zeros exhaust the degree of $P$,
so they are simple and strictly interlace the zeros of $Q$.
Starting with the final positive constant and the preceding linear
polynomial, induction applies throughout the sequence. In particular,
\[
 H_n(\nu_{n,i})N_n'(\nu_{n,i})>0\qquad(1\le i\le n).
\]
Since $F_n(\nu_{n,i})=-H_n(\nu_{n,i})$ and
$F_n(0)=16^nd_n(n)^2>0$,
there is a zero of $F_n(x)$ between each pair of consecutive zeros
of $N_n(x)$, and one between $\nu_{n,n}$ and $0$.
Since $\deg F_n(x)=n$, these $n$ zeros account for its full degree,
so each is simple and there are no others.

The reciprocity of $N_n(x)$ gives $\nu_{n,i}=1/\nu_{n,n+1-i}$.
By the definition of $F_n(x)$, taking reciprocals gives the zeros
of $M_n(x)$ without changing their multiplicities. Applying this
to the intervals established for $F_n(x)$, we see that, for
$1\le n\le35$, $M_n(x)$ has exactly one simple zero in each of
\[
 (-\infty,\nu_{n,1}),\qquad
 (\nu_{n,i},\nu_{n,i+1})\quad(1\le i<n).
\]
This proves the theorem.
\end{proof}

\begin{proof}[Proof of Corollary~\ref{thm:infinite-log-concavity}]
For $n\ge1$, Theorem~\ref{thm:main} and the positive leading
coefficient $d_n(n)^2$ show that all coefficients of $M_n(x)$ are positive.
Br\"and\'en's theorem \cite[Section~3]{Branden} states that if a finite
nonnegative sequence has a generating polynomial with only nonpositive
real zeros, then so does its image under $\mathcal L$.
At every iteration the leading and constant coefficients are positive
squares. Thus the degree remains $n$, and the transformed polynomial
has only negative zeros and strictly positive coefficients.
Starting with $M_n(x)$, induction therefore
proves the asserted real-rootedness and strict coefficient positivity
for every iterate $r\ge1$. Together with the positivity of the original
coefficients in~\eqref{eq:bm-coefficients}, this proves infinite
log-concavity. The case $n=0$ is immediate.
\end{proof}

\section*{Acknowledgements}

Matthew H.~Y.~Xie was
supported by the National Natural Science Foundation of China (Grant
No.~12271403). Philip B.~Zhang was supported by the National Natural Science
Foundation of China (Grant No.~12171362) and Natural Science Foundation of
Tianjin Municipality (Grant No.~25JCYBJC00430).

\section*{Declaration of Generative AI}
During the preparation of this manuscript, the authors used generative AI tools
to assist in exploring possible approaches, checking technical details,
and improving the exposition. All mathematical arguments, computations,
proofs, and results were independently verified by the authors. The authors
take full responsibility for the content of this manuscript.

\end{document}